\title{On the domain of convergence of spherical harmonic expansions}
\author{O. Costin$^1$, R. D. Costin$^1$, C. Ogle$^1$, M. Bevis$^2$}
 \newtheorem{proposition}{proposition}
 \newtheorem*{Assumptions*}{Assumptions}
\newtheorem{Lemma}[proposition]{Lemma}
\newtheorem*{Examples*}{Examples}
\newtheorem*{Corollary*}{Corollary}
\definecolor{clemsonorange}{HTML}{EA6A20}
\def\RR{\mathbb{R}}
\def\CC{\mathbb{C}}
\def\NN{\mathbb{N}}
\def\TT{\mathbb{T}}
\def\DD{\mathbb{D}}
\def\<{\langle}
\def\({\left (}
 \def\){\right)}
\def\epsilon{\varepsilon}
\def\phi{\varphi}
\def\le{\leqslant}
\def\ge{\geqslant}
\def\>{\rangle}
\def\0{\emptyset}
\def\r{\textcolor{red}}
\def\r{\textcolor{black}}
\date{$^1$ Department of Mathematics, The Ohio State University\\$^2$ Division of Geodetic Science, The Ohio State University\\ \today}
\begin{document}

\maketitle
\begin{abstract}
  Spherical harmonic expansions (SHEs) play an important role in most of the physical sciences, especially in physical geodesy. Despite many decades of investigation, the large order behavior of the SHE coefficients, and the precise domain of convergence for these expansions, have remained open questions. These questions are settled in the present paper for generic planets, whose shape (topography) may include many local peaks, but just one globally highest peak. We show that regardless of the smoothness of the density and topography, short of outright analyticity, the spherical harmonic expansion of the gravitational potential converges exactly in the closure of the exterior of the Brillouin sphere\footnote{The smallest sphere around the center of mass of the planet containing the planet in its interior, see Figure \ref{fig:1}.}, and convergence below the Brillouin sphere occurs with probability zero. More precisely, such over-convergence occurs on zero measure sets in the space of parameters. A related result is that, in a natural Banach space, SHE convergence of the potential below the Brillouin sphere occurs for potential functions in a subspace of infinite codimension (while any positive codimension already implies occurrence of probability zero). Provided a certain limit in Fourier space exists, we find the leading order asymptotic behavior of the coefficients of SHEs.

We go further by finding a necessary and sufficient condition for convergence below the Brillouin sphere, which requires a form of analyticity at the highest peak, which would not hold for a realistic celestial body.   Namely, a longitudinal average of the harmonic measure on the Brillouin sphere would have to be real-analytic at the point of contact with the boundary of the planet. It turns out that only a small neighborhood of the peak is involved in this condition.

\end{abstract}

\section{Introduction and overview of the results}\label{intro}

Spherical harmonic expansions (SHEs) play an important role in many branches of physics, geophysics and planetary science, especially in physical geodesy \cite{Kellog},\,\cite{Heiskanen},\,\cite{Pavlis}. The large order behavior of the SHE coefficients, and the precise domain of convergence of the expansions have been open questions in mathematical geodesy at least since the 1960's \cite{Krarup},\,\cite{Moritz},\,\cite{Jekeli},\,\cite{Wang},\,\cite{Hu},\,\cite{Hirt}.

These questions are settled in the present paper for generic model planets with possibly many local peaks but a unique globally highest one, and with various degrees of regularity of the topography  and mass density. We find that the decay of the coefficients is faster for smoother planets\footnote{Except for pathological shapes  where the highest peak is a sharp cusp.}. Our analysis would easily extend to a finite number of peaks of equal height, but that is  non-generic and would unnecessarily complicate the calculations.

Theorem \ref{Thm1} shows that the domain of convergence only depends on the regularity properties of the surface of the planet and of the mass density. Perhaps remarkably, smoothness only matters  in an arbitrarily small neighborhood of the highest peak of the surface,  while the rest of the features of the planet are immaterial. The domain of convergence always contains the closed exterior of the Brillouin sphere (the smallest sphere around the center of mass of the planet containing the planet in its interior, see Figure \ref{fig:1}) and, except for zero measure sets in the parameter space, contains no point below it. More precisely, in a natural Banach space, SHE converge at no point below the Brillouin sphere except for a set of infinite codimension (hence meagre). Provided a certain limit in Fourier space exists see \eqref{eq:regcond}, we find the leading order asymptotic behavior of the coefficients of the SHE (see \eqref{eq:asympt-beh}). The Banach space we use stems from a generalization of this Fourier condition.

Theorem \ref{Thm1} assumes some minimal regularity for the topography; lower regularity classes are treated in Theorem \ref{Thm3}.

 Using potential theory and very recent methods of resurgent analysis \cite{CostinDunne}, in Theorem \ref{Thm2} we find a necessary and sufficient condition of convergence below the Brillouin sphere. This condition is that a longitudinal average of the harmonic measure on the Brillouin sphere has to be real-analytic\footnote{Of course, such a feature cannot be expected from a realistic celestial body\label{F1}.}. Only an (arbitrarily small) neighborhood of the highest peak is involved in this condition.

In the parallel paper \cite{ocb}, using elementary topological methods but assuming only continuity, we establish a (weaker) density result: that for any $\epsilon>0$ there is a dense set of planets for which convergence does not extend by a distance exceeding $\epsilon$ below the Brillouin sphere.

In  upcoming work we will use the asymptotic formulas derived here to estimate the optimal place to truncate the SHE on or near the Brillouin sphere, that is, the order of the SHE polynomial that ensures maximal accuracy. There, we will also address the practical question of how large a neighborhood of the peak one has to consider in order that the regularity features affect a fixed number of coefficients. Using the results in \cite{CostinDunne} we will provide optimal methods of downward continuation of the potential given by a truncated SHE from the Brillouin sphere down to the topography.

 In Section \ref{Further} we provide a physical interpretation of our results and methods.

\begin{figure}
  \centering
  \includegraphics[scale=0.7]{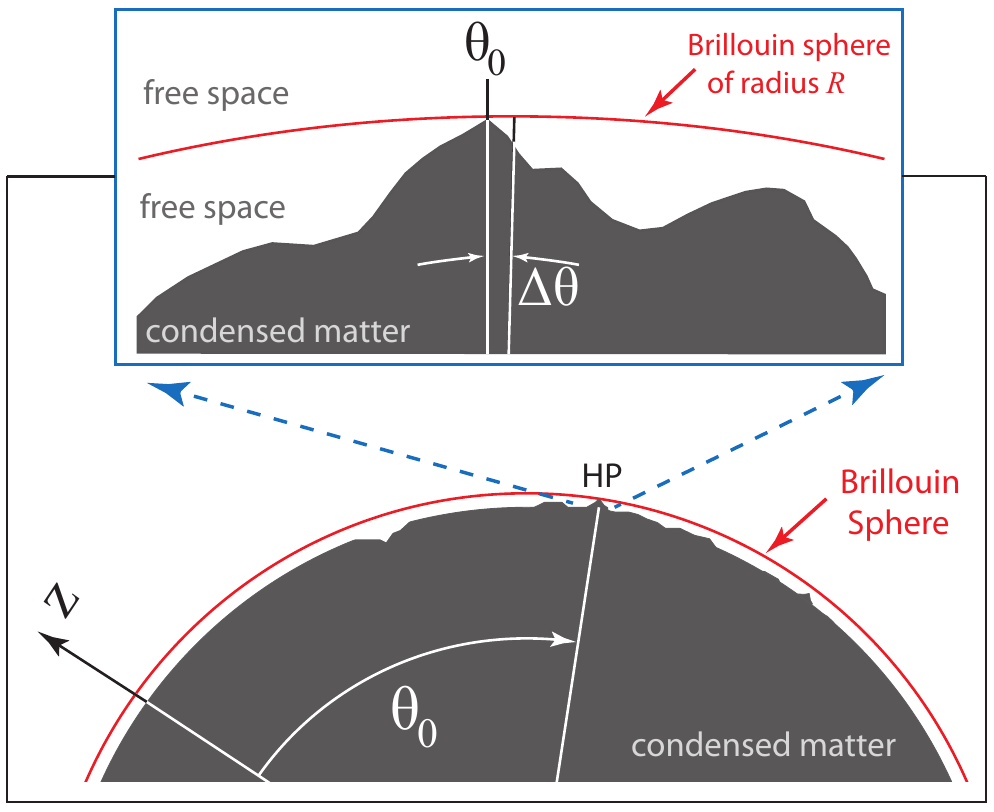}
  \caption{A generic planet with exactly one global highest peak (HP) at latitude $\theta_0$ (enlarged part). The radius of convergence of the SHE only depends on the regularity of the surface shape and of the mass density in an arbitrarily small neighborhood of the peak. The the red circle marks the Brillouin sphere (the smallest sphere around the center of mass of the planet containing the planet in its interior).} 
  \label{fig:1}
\end{figure}

\section{Notations} 
Let $P$ be a planet and fix an observation point outside $P$. Choose a polar representation of the points in the planet so that the observation point is on the $Oz$ axis, at distance $z$ to the origin. Let $R=\{\max r:(r,\theta,\lambda)\in P\}$ be the Brillouin radius. (Here we use the common convention in applications of SHE,  where the Eulerian angle corresponding to longitude is denoted by $\lambda$.)

We also make the common assumption that the influence of the atmospheric mass is negligible.

The gravitational potential at $(0,0,z)$, $z>R$ can be expanded as a power series in $z$ (using dominated convergence and the generating function of Legendre polynomials $P_n$), as
\begin{equation}
  \label{eq:potential-planet}
 {-G} \int_P\frac{\rho(r,\theta,\lambda)r^2\sin\theta\, dr\, d\theta\, d\lambda}{\sqrt{ r^2-2{r}{z}\cos\theta+z^2}}= \frac{-G}z \int_P\frac{\rho(r,\theta,\lambda)r^2\sin\theta\, dr\, d\theta\, d\lambda}{\sqrt{(\frac{r}{z})^2-2\frac{r}{z}\cos\theta+1}}=z^{-1}\sum_{n\ge 0}C_n z^{-n}
\end{equation}
where
\begin{equation}
  \label{eq:eqcoef}
  C_n=-G\int_Pr^{n+2}\rho(r,\theta,\lambda)\,  \sin\theta \, P_n(\cos\theta)\, dr\, d\theta\, d\lambda
\end{equation}
is the $n$-th coefficient of the SHE  of the potential. Integrating first in $\lambda$, and assuming for simplicity (which is also the generic case), that
for each $r$ and $\theta$ the domain of integration in $\lambda$ is an interval $[\lambda_m(r,\theta),\lambda_M(r,\theta)] $,  we get 
\begin{equation}
  \label{eq:eqcoef2}
    C_n=\int_0^\pi  \sin\theta P_n(\cos\theta)\, d\theta\int_{I_\theta}\, r^{n+2}v(r,\theta)\, dr
  \end{equation}
  where
  \begin{equation}
    \label{defv}
     v(r,\theta)=-G\int_{\lambda_m(r,\theta)}^{\lambda_M(r,\theta)}\rho(r,\theta,\lambda) d\lambda
  \end{equation}
 and $I_\theta=\{r|(r,\theta,\lambda)\in P\text{ for some }\lambda\in[0,2\pi)\}$. For the same reason as above we assume $I_\theta$ is an interval, $$I_\theta=[r_m(\theta),r_M(\theta)]$$
Define
\begin{equation}
  \label{defg}
  g(\theta)=\sqrt{\sin\theta}\ v(r_M(\theta),\theta)
 \end{equation}
In the following, for two sequences $\{f_1(n)\}_{n\in\NN}$ and $\{f_2(n)\}_{n\in\NN}$  we write 
$$f_1\asymp f_2 \ \ \ \ \text{if\ \ \ \ }\lim_{n\to\infty} \frac{f_1(n)}{f_2(n)}=1$$ 
We write $\Re(\cdot)$ to denote the real part of a complex number or complex-valued function. For the Fourier transform we use the convention 
 $\mathcal{F}f(k)=\hat{f}(k)=:\frac 1{\sqrt{2\pi}}\int_{-\infty}^\infty e^{-ikx}f(x)\,dx$.
\subsection{A genericity  assumption on the planet}\label{asnot}

Assume that there is a unique angle $\theta_0$ of absolute maximum of $r_M(\theta)$, where $r_M(\theta_0)=R$, and that $\theta_0\in (0,\pi)$. 

Define $F$ by
\begin{equation}
  \label{eq:defF}
  e^{-F(\theta)}=\frac{r_M(\theta)}{R}
\end{equation}
We assume there is a neighborhood $I_0=(\theta_0-\delta, \theta_0+\delta)$  such that  outside it $F(\theta)>\delta_1>0$. By the above, we have $F(\theta_0)=0$, $F(\theta)>0$ for $\theta\ne \theta_0$, and $F$ has a minimum at $\theta_0$. 


  \section{Results}

\subsection{Main theorem for Fourier-based local regularity}
To measure local regularity of a function $f$ at the point $\theta_0$ we employ $C^\infty$ cutoff functions $\phi$ near $\theta_0$ and look at the decay of $\widehat{\phi f}$ for large $|k|$.

  \begin{Theorem}\label{Thm1}
  
 We assume that near $\theta_0$
 $$F(\theta)=c(\theta-\theta_0)^2+h(\theta),\ \ \ \ \text{with\ }h(\theta)=O(\theta-\theta_0)^\beta\ \ (\theta\to\theta_0),\ \beta>2$$
 Let $\phi$ be a cutoff function near $\theta_0$: $\phi\in C^\infty(\RR)$, $\phi(\theta)=1$ for $|\theta-\theta_0|\le\epsilon$ and $\phi(\theta)=0$ for $|\theta-\theta_0|\ge 2\epsilon$, where $\epsilon>0$ is small enough.
 Assume
 \begin{equation}\label{Linf}
 (1+|k|)^{\beta_0} \mathcal{F}(\phi g)\in L^\infty(\RR)\ \ \ \ \text{and}\ \ \ (1+|k|)^{\beta_1} \mathcal{F}(\phi h g)\in L^\infty(\RR), \ \ \text{for\ some\ }\beta_0>1,\beta_1>2
 \end{equation}
\begin{enumerate}
\item  Then
 \begin{equation}
   \label{eq:limsup}
   \limsup_{n\in\NN} R^{-n}n^{\tfrac32+\beta_m}|C_n|\ne 0
 \end{equation}
where $\beta_m=\min\{\beta_0,\beta_1-1\}$, and the domain of convergence of the SHE \eqref{eq:potential-planet} is exactly $\{z\in \CC:|z|\ge R\}$,  apart from exceptional functions $g,h$: they belong to a set of infinite codimension (hence meagre, or negligible) in a natural Banach space.
 
\item Assume further that
\begin{equation}
  \label{eq:regcond}
 \lim_{k\to -\infty} (-k)^{\beta_0} \mathcal{F}(\phi g)=a_0\ne 0 \ \ \ \text{and\ } \ \ \lim_{k\to -\infty} (-k)^{\beta_1} \mathcal{F}(\phi hg)=a_1\ne 0
\end{equation}
for some $\beta_0>1,\beta_1>2$ with $a_0,a_1\in \CC$.

Then the SHE coefficients $C_n$ have the asymptotic behavior
\begin{equation}
      \label{eq:asympt-beh}
   C_n= 2\frac{R^{n+3}}{n^{3/2}}\Re \left[e^{-i\pi/4} e^{i(n+1/2)\theta_0}\left(  \frac{a_0 }{n^{\beta_0}}-  \frac{a_1 }{n^{\beta_1-1}}\right)\right](1+o(1))\ \text{as $n\to\infty$}
    \end{equation}
    unless $\beta_0=\beta_1-1$ and $a_0=a_1$. 
    
    The limits $a_0,\,a_1$ are independent of the size of the support of the cutoff functions in a neighborhood of $\theta_0$.
    \end{enumerate}

  \end{Theorem}
  The proof of Theorem \eqref{Thm1} is given in \S\ref{PfThm1}.
\begin{Note*}
 The infinite codimension of the exceptional set is intuitively clear from the fact that the limsup in \eqref{eq:limsup} has to be zero for all $\beta$. Furthermore, that would only ensure that $F$ and $g$ are locally $C^\infty$, and  convergence below the Brillouin sphere would still not be guaranteed; see \S\ref{S:necsuf}.
\end{Note*}

\subsection{A necessary and sufficient condition of  convergence of SHE for some $z_0<R$}\label{S:necsuf}
A necessary and sufficient condition of convergence for some $z_0$ with $|z_0|<R$ is given in the following theorem.

Given a volume $\mathcal{V}$ bounded by the surface $\Gamma$, and a mass distribution $\rho(r,\theta,\lambda)$ in $L^\infty$, the balayage method \cite{bal1,bal2,bal3,bal4,bal5} provides a surface mass density function $\sigma(\theta,\lambda)$  on $\Gamma$ such that the gravitational potential produced by $\tilde{\rho}$ on $\Gamma$ equals, outside $\Gamma$, the potential produced by $\rho$ on $\mathcal{V}$. For the next theorem we need to take $\Gamma$ to be the Brillouin sphere and {\em extend the planet $P$} to the whole interior of the Brillouin sphere, by assigning zero density to any point between $P$ and the Brillouin sphere. This evidently has no effect on the potential outside $P$. See also \S\ref{S:phys1}.   Choosing radial units so that $R=1$, by balayage, the gravitational potential of $P$ can be written as (with $S^2$ the unit sphere)
\begin{equation}
  \label{eq:balayage1}
V(z)=-G\int_{S^2}\frac{\sigma(\theta,\lambda)\sin\theta d\lambda d\theta}{\sqrt{z^2-2 z \cos\theta+1}}=-\int_0^\pi \frac{\mu(\cos \theta )\sin \theta  d\theta}{\sqrt{z^2-2 z \cos \theta+1}}=\int_{-1}^1\frac{\mu(x )  dx}{\sqrt{z^2-2 zx +1}}
\end{equation}
where we integrated first in $\lambda$, denoted 
\begin{equation}
  \label{eq:oneint}
  \mu(\cos \theta )=G\int_0^{2\pi}\sigma(\theta,\lambda)d\lambda
\end{equation}
and changed the variable $\cos\theta=x$.\footnote{Since at $\theta=0,\pi$ the latitude is not defined, these are excluded. We also exclude $\theta=\pi/2$, a special case, for brevity of the arguments.} We further write \eqref{eq:balayage1} as 
\begin{equation}
  \label{eq:reproc1}
V(z)=\frac{1}{\sqrt{z^2+1}}\int_{-1}^1 \frac{\mu(x) dx}{\sqrt{1- p x}}:= \frac{1}{\sqrt{z^2+1}}Q(p), \text{ where }p=\frac{2z}{z^2+1}, \ Q(p)=\int_{-1}^1\frac{\mu(x) dx}{\sqrt{1- p x}}
 \end{equation}

\begin{Theorem}\label{Thm2}
  Assume $\mu$ is H\"older continuous. The SHE  converges at some point $z_0<1$ iff $\mu(\cos\theta)$ is real-analytic at any $\theta\in (0,\tfrac\pi 2)\cup (\tfrac\pi 2,\pi)$, including at the point $\theta_0$ where the Brillouin sphere touches the planet provided that $\theta_0\ne 0,\tfrac\pi 2,\pi$.
\end{Theorem}
The proof of Theorem \ref{Thm2} is given in \S\ref{ST2}.
\begin{Note*}
  \begin{enumerate}
    
  \item The case $\theta_0=0$ could have been included via the introduction of some additional machinery (see the comment after the proof of Theorem \ref{Thm2}); however the genericity assumption allows us to safely exclude it.

  \item If the SHE converges at $z_0$, then (by Abel's theorem) it converges to an analytic function in the domain $\{z\in\CC:|z|>z_0\}$, and we employ the term analyticity in this sense.
  \item  Existence of analytic continuation of the balayage measure inside $P$ would ensure that the criterion of the theorem would be met. Though only exceptional planets would have this property, it is a property which is {\bf not} automatically prevented by the fact\, inside $P$, the potential does not satisfy Laplace's equation anymore, but Poisson's equation. Indeed, if $P$ is a ball with radially symmetric density, the potential outside $P$ is equivalent to that of a point at the center of $P$. Clearly, analytic continuation exists through $P$ except at it's center. The way out of this apparent paradox is that the potential obtained by analytic continuation inside $P$ is simply {\bf not} that of $P$.

  \item The balayage measure for a ball has an explicit formula, as the Green's function for a ball is also explicit (obtained by the  method of images, \cite{Evans}); see \S\ref{S:Greenf}. In the explicit formula it is manifest that, if one chooses a ball of strictly larger radius than the Brillouin radius, then the balayage measure on the larger sphere would have analytic continuation down to the Brillouin sphere. The same formula  also shows that analyticity is ensured away from an arbitrarily small neighborhood of the highest peak.
  \end{enumerate}
\end{Note*}

\subsection{Results in lower regularity}\label{SLowerReg}

Precise asymptotics in regularity lower than one derivative would be best done with Fourier analysis, as in Theorem\,\ref{Thm1}. This would require a more substantial modification of the arguments and will be done elsewhere. The expected result is still \eqref{eq:asympt-beh} with $\beta_0,\beta_1$ less than $1$. The model below is generally unrealistic, but exhibits interesting phenomena, see Note\,\ref{NLowerReg}.

Denote $F(x+\theta_0)=\tilde{F}(x)$. Then $\tilde{F}(0)=0$, and $\tilde{F}(x)>0$  for $x\ne 0$.
We assume  that $F$ and $g$ are continuous, and have the same regularity.
We also assume that $\tilde{F}$ has exactly $\ell\in\{0,1\}$ derivatives at $x=0$, having one of the following behaviors at $x=0$.

For $\ell=0$ we assume that in a small neighborhood of zero

\begin{equation}
\label{assumeF}
\tilde{F}(x)=\left\{ \begin{array}{l l}
a_-|x|^\alpha+O\left(|x|^{\beta}\right) &\text{for\ } x<0\\
a_+|x|^\alpha+O\left(|x|^{\beta}\right) &\text{for\ } x>0
\end{array}\right. ,\ \ \ \alpha\in(0,1],\ \beta>\alpha>0
\end{equation}
Note that the condition that $F$ has a minimum at $\theta_0$ implies  $a_\pm>0$.

For $\ell=1$, we assume that for $|x|<\delta$
\begin{equation}
\label{assumeFb}
\tilde{F}(x)=\left\{ \begin{array}{l l}
a_- |x|^{\alpha} &\text{for\ } -\delta <x<0\\
a_+ \, |x|^{\alpha} &\text{for\ } 0<x<\delta
\end{array}\right. ,\ \ \ \alpha\in(1,2]
\end{equation}
with  $a_\pm>0$. Note that $\tilde{F'}(0)=F(0)=0$.

For each $\ell$ the assumptions on $g$ are similar.

  \begin{Theorem}\label{Thm3} 
    Assume $F$ and $g$ satisfy the assumptions of Section\,\ref{asnot}.

 1.  (i) Assume that,  additionally, $\tilde{F}$ satisfies \eqref{assumeF} and the expansion  of $g$ at $\theta_0$, with remainder, has one of the following forms
 
 a) $g\in C^k$, $k\ge 1$, of the form
  \begin{equation}\label{asumgka}
 g(\theta)=g_k(\theta-\theta_0)^k\, (1+h(\theta)) ,\ g_k\ne 0
 \end{equation}
  with $h(\theta_0)=0$, $h$ continuous\footnote{We note that $g(\theta_0)=0$, see \eqref{defv}, \eqref{defg}.}, or
  
  b) $g$ has a  cusp at $\theta_0$:  for some $k\ge 1$ (not necessarily integer),  
   \begin{equation}\label{asumgkb}
 g(\theta)=(1+h(\theta))\times \  \left\{ 
 \begin{array}{ll}
 g_{+}|\theta-\theta_0|^{k} & \text{for\ } \theta>\theta_0 \\
  g_{-}|\theta_0-\theta|^{k}& \text{for\ } \theta<\theta_0
  \end{array}\right.
  \end{equation}
  with $h(\theta_0)=0$, $h$ continuous, and $g_+,g_-$ not both zero.

 If $g$ has the form \eqref{asumgka} then for $\alpha\in(0,1)$ we have
 \begin{equation}
    \label{eq:asymp10a}
    C_n\asymp\frac{R^{n+3}}{n^{3/2+({k}+1)/\alpha}}\frac{\sqrt{2}\, \Gamma\left(\frac{{k}+1}\alpha\right)\, g_{{k}}}{\alpha\sqrt{\pi}}\Re \left\{e^{-i\pi/4}  e^{i(n+1/2)\theta_0}   \, \left[  a_+^{-\frac {k+1}\alpha} + (-1)^k a_-^{-\frac {k+1}\alpha} \right]\right\}
  \end{equation}
while for $\alpha=1$ we have
  \begin{equation}
    \label{eq:asymp10aa1}
    C_n\asymp\frac{R^{n+3}}{n^{3/2+{k}+1}}\frac{\sqrt{2}\, \Gamma\left({{k}+1}\right)\, g_{{k}}}{\sqrt{\pi}}\Re \left\{e^{-i\pi/4}  e^{i(n+1/2)\theta_0}   \, \left[  (a_+-i)^{-( {k+1})} + (-1)^k (a_-+i)^{-({k+1}) }\right]\right\}
  \end{equation}
 \r{(assuming the right sides of \eqref{eq:asymp10a}, \eqref{eq:asymp10aa1} do dot vanish).}
 
 If $g$ has the form \eqref{asumgkb} then:
 for $\alpha\in(0,1)$
 \begin{equation}
    \label{eq:asymp10ab}
    C_n\asymp\frac{R^{n+3}}{n^{3/2+(k+1)/\alpha}}\frac{\sqrt{2}\, \Gamma\left(\frac{{k}+1}\alpha\right)}{\alpha\sqrt{\pi}}\Re \left[e^{-i\pi/4}  e^{i(n+1/2)\theta_0}   \, \left(  g_+a_+^{-\frac {k+1}\alpha} + g_- a_-^{-\frac {k+1}\alpha} \right)\right]
  \end{equation}
while for $\alpha=1$ we have
  \begin{equation}
    \label{eq:asymp10ba1}
    C_n\asymp\frac{R^{n+3}}{n^{3/2+{k}+1}}\frac{\sqrt{2}\, \Gamma\left({{k}+1}\right)}{\sqrt{\pi}}\Re \left\{e^{-i\pi/4}  e^{i(n+1/2)\theta_0}   \, \left[ g_+ (a_+-i)^{-( {k+1})} + g_- (a_-+i)^{-({k+1}) }\right]\right\}
  \end{equation}
 \r{(assuming the right sides of \eqref{eq:asymp10ab}, \eqref{eq:asymp10ba1} do dot vanish).}

  (ii) If, additionally, $\tilde{F}$ satisfies \eqref{assumeFb} and $\tilde{g}(x):=g(x+\theta_0)$  has a similar regularity at zero:
   \begin{equation}
  \label{assumeg1}
\tilde{g}(x)=g_1x+\left\{ \begin{array}{l l}
g_-|x|^\alpha &\text{for\ } -\delta<x<0\\
g_+|x|^\alpha &\text{for\ } 0<x<\delta
\end{array}\right.
\end{equation}
   then, for $\alpha\in (1,2]$
  \begin{multline}
  \label{eq17}
 C_n\asymp\frac{R^{n+3}}{n^{\frac 32+\alpha+1}}\frac{\sqrt{2}\, \Gamma(\alpha+1)}{\sqrt{\pi}}\times \\
 \Re \left\{ e^{-i\pi/4}  e^{i(n+1/2)\theta_0} \left[ \left( i^\alpha \left[ig_++g_1a_+(1+\alpha)\right]-  (-i)^\alpha \left[ig_- +g_1a_-(1+\alpha)\right]\right)\right]\right\}
  \end{multline}
 \r{assuming the right side of \eqref{eq17} does not vanish.}

    2. Except for values of the parameters $a_+,a_-,g_+,g_-$ that make the leading asymptotic expressions vanish, which occur on a set of {\em zero measure,} in the parameter space, and is a meagre set, the domain of convergence of the SHE is exactly $\{z:|z|\ge R\}$.

  \end{Theorem}
The proof of Theorem \ref{Thm3} is given in \S\ref{ST3}.

\begin{Note}\label{NLowerReg}
As the Theorems\,\ref{Thm1} and \ref{Thm3} show,  for sufficiently regular planets, more regularity results in faster decay of the algebraic part of the asymptotics of the SHE coefficients. Perhaps surprisingly \r{at a glance}, fast decay also occurs in case \eqref{assumeF} as  $\alpha$ decreases. The reason is that, in this case, the tallest peak looks like a thinner and thinner "antenna" contributing with zero mass in the limit $\alpha\to 0$.
\end{Note}

\section{Proof of Theorem\,\ref{Thm1}}\label{PfThm1}
 \subsection{A general lemma.}

\begin{Lemma}\label{L1}

Under the assumptions of Section\,\ref{asnot} the coefficients $C_n$ in \eqref{eq:eqcoef} satisfy
\begin{equation}
    \label{eq:asymp1}
    C_n\asymp\frac{R^{n+3}}{n^{3/2}}\frac{\sqrt{2}}{\sqrt{\pi}}\, \Re \left(e^{-i\pi/4} J\right)
  \end{equation}
where 
\begin{equation}
  \label{eq:defJ}
  J=\int_{I_0} g(\theta)e^{-(n+3) F(\theta)} e^{i(n+1/2)\theta}d\theta
  \end{equation} 
(unless $J=0$, which we show only happens on meagre sets).
\end{Lemma}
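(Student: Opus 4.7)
The plan is to reduce the double integral \eqref{eq:eqcoef2} to the oscillatory integral \eqref{eq:defJ} in three stages: do the radial integration to leading order, localise the $\theta$-integration to $I_0$ using the strict minimum property of $F$, and substitute the Mehler--Dirichlet asymptotic for $P_n(\cos\theta)$.

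First, since $r^{n+2}$ concentrates at the upper endpoint $r=r_M(\theta)$ and $v(\cdot,\theta)$ is continuous there, Watson's lemma (or equivalently dominated convergence after substituting $r=r_M(\theta)s$) yields
\begin{equation*}
\int_{r_m(\theta)}^{r_M(\theta)} r^{n+2}\, v(r,\theta)\, dr \;=\; \frac{r_M(\theta)^{n+3}}{n+3}\,v(r_M(\theta),\theta)\bigl(1+o(1)\bigr),
\end{equation*}
uniformly on compact subsets of $(0,\pi)$. Using \eqref{eq:defF} to replace $r_M(\theta)^{n+3}$ by $R^{n+3}e^{-(n+3)F(\theta)}$, \eqref{defg} to combine $\sqrt{\sin\theta}\,v(r_M(\theta),\theta)=g(\theta)$, and the genericity bound $F\ge\delta_1>0$ on $[0,\pi]\setminus I_0$ to discard an exponentially small tail, \eqref{eq:eqcoef2} becomes
\begin{equation*}
C_n \;\asymp\; \frac{R^{n+3}}{n+3}\int_{I_0} \sqrt{\sin\theta}\; P_n(\cos\theta)\,g(\theta)\, e^{-(n+3)F(\theta)}\, d\theta.
\end{equation*}

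Next, on $I_0$ the angle is bounded away from $0$ and $\pi$, so the Mehler--Dirichlet asymptotic
\begin{equation*}
P_n(\cos\theta)\;=\;\sqrt{\frac{2}{\pi n \sin\theta}}\,\cos\!\left((n+\tfrac12)\theta-\tfrac{\pi}{4}\right)\bigl(1+O(1/n)\bigr)
\end{equation*}
holds uniformly. The $\sqrt{\sin\theta}$ in the integrand cancels the $1/\sqrt{\sin\theta}$ from Mehler, and rewriting the cosine as $\Re\bigl(e^{-i\pi/4}e^{i(n+1/2)\theta}\bigr)$ extracts the integral $J$ from \eqref{eq:defJ}. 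Absorbing $1/(n+3)\cdot 1/\sqrt{n}\asymp n^{-3/2}$ into the prefactor produces exactly \eqref{eq:asymp1}.

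The main obstacle is error control. Both the $o(1)$ from the radial step and the $O(1/n)$ relative Mehler correction must be dominated by $\Re(e^{-i\pi/4}J)$, and since $J$ itself decays only algebraically, this is not automatic. One shows that the Mehler remainder, after integration, is a separate oscillatory integral whose amplitude is bounded (via Laplace's method at the non-degenerate minimum $\theta_0$) by a power of $n$ strictly smaller than $|J|$ under the hypotheses of Theorem \ref{Thm1}; a similar argument handles the radial error. Finally, the assertion that $J=0$ occurs only on meagre sets follows because requiring $J=0$ along a subsequence $n_k\to\infty$ imposes infinitely many independent linear constraints on the pair $(g,F)$ in the natural Banach space of Theorem \ref{Thm1}, carving out a subspace of infinite codimension.
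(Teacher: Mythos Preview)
Your argument is essentially the same as the paper's: substitute in the radial variable and apply Watson's lemma to extract the factor $r_M(\theta)^{n+3}v(r_M(\theta),\theta)/(n+3)$, rewrite via $F$ and $g$, drop the complement of $I_0$ using $F\ge\delta_1$, and then insert the large-$n$ asymptotic of $P_n(\cos\theta)$ to arrive at \eqref{eq:asymp1}. The paper uses the substitution $r=r_M e^{-s}$ rather than $r=r_M(\theta)s$, but both lead to the same Watson's-lemma conclusion.

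One remark: your final paragraph on error control appeals to ``the non-degenerate minimum $\theta_0$'' and ``the hypotheses of Theorem~\ref{Thm1}'', but Lemma~\ref{L1} is stated only under the weaker assumptions of Section~\ref{asnot}, which do not include the quadratic behaviour of $F$. The paper handles this the same way you implicitly do---by the caveat ``unless $J=0$''---and defers the actual justification that the error terms are subdominant (and that $J\ne 0$ generically) to the subsequent Lemmas~\ref{Genform1}, \ref{Genform} and \ref{LKer}, where the additional hypotheses of Theorem~\ref{Thm1} are in force. So your error-control sketch is correct in spirit but belongs to those later lemmas rather than to Lemma~\ref{L1} itself.
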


\begin{proof}
    
 We substitute $r=r_M e^{-s}$ in \eqref{eq:eqcoef2} to get
\begin{equation}\label{eq:eqcoef3}
  C_n=\int_0^\pi \sin\theta\, P_n(\cos\theta)\, r_M(\theta)^{n+3} d\theta\, \int_0^{\log[r_M(\theta)/r_m(\theta)]}\, e^{-(n+3)s}\, v(r_M e^{-s},\theta)\,ds
\end{equation}
For the innermost integral we apply Watson's Lemma (see e.g. \cite{Ablowitz}, \cite{Book} ). The result is $\asymp \tfrac{g(\theta)}{n+3}\asymp \tfrac{g(\theta)}{n}$. Therefore, using the notation \eqref{eq:defF},
\begin{multline}\label{eq:eqcoef4}
  C_n\asymp \frac{1}{n+3}\, \int_0^\pi \sqrt{\sin\theta}P_n(\cos\theta)r_M(\theta)^{n+3}g(\theta)d\theta\\ \asymp\frac{R^{n+3}}{n} \int_0^\pi \sqrt{\sin\theta}P_n(\cos\theta)\, e^{-(n+3)F(\theta)}g(\theta)d\theta
\end{multline}

Since we assumed $F(\theta)>\delta_1>0$ outside $I_0$ , the part of the integral in \eqref{eq:eqcoef4} in the complement of $I_0$ is exponentially  small. Therefore
\begin{equation}
    \label{star1}
 C_n   \asymp\frac{R^{n+3}}{n} \int_{I_0} \sqrt{\sin\theta}\, P_n(\cos\theta)\, e^{-(n+3)F(\theta)}g(\theta)d\theta
      \end{equation}

Using now the well known asymptotic behavior of Legendre polynomials of large order \cite{DLMF},
\begin{equation}
    \label{star2}
P_{n }(\cos \theta )={\frac {2}{\sqrt {2\pi n \sin \theta }}}\cos \left(\left(n +{\tfrac {1}{2}}\right)\theta -{\frac {\pi }{4}}\right)+{\mathcal {O}}\left(n ^{-3/2}\right),\quad \theta \in (0,\pi )
 \end{equation}
  we get \eqref{eq:asymp1}.
  \end{proof}

 Denote 
 \begin{equation}
    \label{notx}
    x=\theta-\theta_0, \ \ \tilde{F}(x)=F(\theta_0+x),\ \  \tilde{g}(x)=g(\theta_0+x),\ \ \tilde{h}(x)=h(\theta_0+x),\ \ \tilde{\phi}(x)=\phi(\theta_0+x)
    \end{equation}
     ($\tilde{\phi}$ is supported in $[-2\epsilon,2\epsilon]$ and $\tilde{\phi}=1$ on $[-\epsilon,\epsilon]$). With these notations, formula \eqref{eq:defJ} becomes
     \begin{equation}
  \label{eq:defJtilde}
  J=e^{i(n+1/2)\theta_0}\int_{-\delta}^\delta \tilde{g}(x)e^{-(n+3) \tilde{F}(x)} e^{i(n+1/2)x}dx
  \end{equation}

   \begin{Lemma}\label{Genform1}
Under the further assumptions \eqref{Linf} we have 
\begin{equation}
    \label{eq:asymp1}
    C_n\asymp\frac{R^{n+3}}{n^{3/2}}\frac{\sqrt{2}}{\sqrt{\pi}}\Re \left( e^{-i\pi/4} J_n\right)
  \end{equation}
   where
 \begin{equation} \label{eq:asJn0}
 e^{-i(n+1/2)\theta_0} J_n\asymp  \frac 1{\sqrt{2cn}} \int_{-n-n^q}^{-n+n^q}\hat{f}_0(k) e^{-\frac{(k+n)^2}{4cn}}dk -n \frac 1{\sqrt{2cn}} \int_{-n-n^q}^{-n+n^q}\hat{f}_1(k) e^{-\frac{(k+n)^2}{4cn}}dk
\end{equation}
and
 \begin{equation}
 \label{f0f1}
 f_0= \tilde{g}\tilde{\phi},\ \ \  f_1= \tilde{h}\tilde{g}\tilde{\phi}
 \end{equation}
and $q$ is any number such that $\tfrac12<q<1$ (unless $J_n$ vanishes, which we will show is exceptional in Section\,\ref{Thm1p1}).
\end{Lemma}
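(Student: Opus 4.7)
The plan is to reduce $J$ in \eqref{eq:defJtilde} to the Fourier-side form \eqref{eq:asJn0} by a two-step passage: a Taylor expansion of the non-quadratic part of the exponential, followed by Fourier-side computation of each resulting Gaussian-windowed integral. First I would insert the cutoff $\tilde{\phi}$: on $\{\epsilon\le|x|<\delta\}$ we have $\tilde{F}(x)\ge c\epsilon^2/2$ for small enough $\epsilon$ (since $\tilde{h}(x)=O(|x|^\beta)$, $\beta>2$), so replacing $\tilde{g}$ by $\tilde{g}\tilde{\phi}$ introduces only an $O(e^{-c\epsilon^2(n+3)/2})$ error, negligible at any algebraic rate. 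Next, I would expand
$$
e^{-(n+3)\tilde{F}(x)}=e^{-(n+3)cx^2}\sum_{k\ge 0}\frac{\bigl(-(n+3)\tilde{h}(x)\bigr)^k}{k!}.
$$
Retaining only the $k=0,1$ terms and setting $f_0,f_1$ as in \eqref{f0f1} gives
$$
e^{-i(n+1/2)\theta_0}\,J\asymp A_0-(n+3)A_1,\quad A_j:=\int_{\mathbb{R}} f_j(x)\,e^{-(n+3)cx^2}\,e^{i(n+1/2)x}\,dx,
$$
with the Taylor tail $k\ge 2$ to be shown negligible.

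Next I would transform each $A_j$ to the Fourier side via the convolution theorem and the identity $\mathcal{F}(e^{-\sigma x^2})(k)=\frac{1}{\sqrt{2\sigma}}e^{-k^2/(4\sigma)}$, obtaining
$$
A_j=\frac{1}{\sqrt{2(n+3)c}}\int_{\mathbb{R}}\hat{f}_j(\xi)\,e^{-(\xi+n+1/2)^2/(4(n+3)c)}\,d\xi.
$$
The Gaussian is peaked at $\xi=-(n+1/2)$ with width $\sim\sqrt{2cn}$. For any $q\in(1/2,1)$, the tail portion $|\xi+n|>n^q$ contributes at most a constant times $e^{-n^{2q-1}/(4c)}$, once one combines the polynomial decay $|\hat{f}_j(\xi)|\le M_j(1+|\xi|)^{-\beta_j}$ from \eqref{Linf} (so $\hat{f}_j\in L^1\cap L^\infty$ since $\beta_0,\beta_1>1$) with the Gaussian bound. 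Finally I would replace $(n+3)c$ by $cn$ in both prefactor and Gaussian, and $n+1/2$ by $n$ in the Gaussian's center --- each substitution perturbing the integrand by $1+O(n^{-1/2})$ uniformly on the truncated window --- and use $(n+3)A_1=nA_1\bigl(1+O(n^{-1})\bigr)$ to arrive at \eqref{eq:asJn0}.

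The main obstacle I anticipate is justifying that the Taylor-tail terms ($k\ge 2$) contribute strictly below both leading orders $A_0\sim n^{-\beta_0}$ and $nA_1\sim n^{-(\beta_1-1)}$. The key input is that $\tilde{h}(x)=O(|x|^\beta)$ boosts the Fourier-side decay of $\tilde{g}\tilde{\phi}\tilde{h}^k$ beyond the $\beta_0$ decay of $\hat{f}_0$: heuristically by $k\beta$, so the $k$-th Taylor term contributes at scale $\frac{n^k}{k!}n^{-(\beta_0+k\beta)}=O\bigl(n^{-\beta_0-k(\beta-1)}\bigr)$, which is $o(n^{-\beta_0})$ (and $o(n^{-(\beta_1-1)})$) for every $k\ge 2$ provided $\beta>1$; the assumption $\beta>2$ comfortably secures this. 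A rigorous implementation requires tracking these Fourier-side decay improvements explicitly rather than relying only on pointwise bounds on $R_n$, since the latter would give only $O(n^{3/2-\beta})$, which is not always small enough relative to $n^{-\beta_0}$. The degenerate case where $J_n$ vanishes is deferred to Section\,\ref{Thm1p1}.
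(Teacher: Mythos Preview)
Your reduction to the Fourier side and the truncation to $|k+n|\le n^q$ are carried out just as in the paper, and the various $n+3\to n$, $n+\tfrac12\to n$ simplifications are harmless. The genuine gap is exactly the obstacle you flag: the control of the Taylor tail $k\ge 2$. Your proposed remedy does not work under the stated hypotheses. Assumption \eqref{Linf} gives polynomial decay only for $\hat f_0=\widehat{\tilde g\tilde\phi}$ and $\hat f_1=\widehat{\tilde h\tilde g\tilde\phi}$; nothing is assumed about $\widehat{\tilde h^k\tilde g\tilde\phi}$ for $k\ge 2$. The heuristic ``multiplying by $\tilde h=O(|x|^\beta)$ improves Fourier decay by $\beta$'' is false in general: $O(|x|^\beta)$ is a pointwise vanishing condition, not a regularity condition, and a function that is $O(|x|^\beta)$ near $0$ can have arbitrarily bad Fourier tails. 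So the decay exponent $\beta_0+k\beta$ you invoke for the $k$-th term is not available, and you are left with precisely the pointwise bound $O(n^{3/2-\beta})$ that you yourself note can dominate $n^{-\beta_0}$.

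The paper avoids this by an intermediate spatial localization that you are missing. Choose $p$ with $\tfrac1\beta<p<\tfrac12$ (this is where $\beta>2$ enters). First restrict the $x$-integral to $[-n^{-p},n^{-p}]$: the complement in $[-\delta,\delta]$ contributes $O(e^{-cn^{1-2p}})$ because of the Gaussian factor $e^{-ncx^2}$. On $[-n^{-p},n^{-p}]$ one has $n\tilde h(x)=O(n\cdot n^{-p\beta})=O(n^{1-p\beta})=o(1)$, so the expansion $e^{-n\tilde h}=1-n\tilde h+O\bigl((n\tilde h)^2\bigr)$ is now a \emph{pointwise} asymptotic with remainder $o(n\tilde h)$ uniformly on that interval; the tail is thus automatically dominated by the $k=1$ term without any Fourier information on $\tilde h^2$. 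After this truncation one inserts $\tilde\phi$ (which equals $1$ there), extends the integral back to all of $\RR$ at the cost of another exponentially small error, and only then passes to the Fourier side. The rest of your argument then goes through unchanged.
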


\begin{proof}

We use Lemma\,\ref{L1}, and further evaluate $J$.
Let $p$ be any number such that $\tfrac 1{\beta}<p<\tfrac 12$.

The part of the integral $J$ in \eqref{eq:defJtilde} over the interval $[n^{-p},\delta]$ is exponentially small:
$$\big| \int_{n^{-p}}^\delta  e^{-n \tilde{F}(x)} e^{inx}  \tilde{g}(x) dx\big|  \le \int_{n^{-p}}^\delta |\tilde{g}(x)| e^{-(n+3) (cx^2 +O(x^\beta))} \le \delta\,  \|g\|e^{- cn^{1-2p}} $$
Similarly the integral over the interval $[-\delta, -n^{-p}]$ is exponentially small. Hence, once we show that the asymptotic behavior of the integral over $[-n^{-p},n^{-p}]$ is power-like, we have, for large $n$
\begin{multline}\label{kapan}
e^{-i(n+\frac12)\theta_0}J  \asymp \int_{-n^{-p}}^{n^{-p}}  e^{-(n+3) \tilde{F}(x)} e^{inx+i\frac x2}  \tilde{g}(x) dx\asymp \int_{-n^{-p}}^{n^{-p}}  e^{-n \tilde{F}(x)} e^{inx}  \tilde{g}(x) dx\\
=\int_{-n^{-p}}^{n^{-p}}  e^{-n \tilde{F}(x)} e^{inx}  \tilde{g}(x) \tilde{\phi}(x)dx 
=\int_{-n^{-p}}^{n^{-p}}  e^{-n (cx^2-ix)} e^{-n\tilde{h}(x)}  \tilde{g}(x) \tilde{\phi}(x)dx\\
\asymp \int_{-n^{-p}}^{n^{-p}}  e^{-n( cx^2-ix)} \left[ 1-n\tilde{h}(x)\right]  \tilde{g}(x) \tilde{\phi}(x)dx 
\asymp \int_{-\infty}^{\infty}  e^{-n( cx^2-ix)} \left[ 1-n\tilde{h}(x)\right]  \tilde{g}(x) \phi(x)dx\\
 = \int_{-\infty}^{\infty}  e^{-n( cx^2-ix)} f_0(x)dx-n\int_{-\infty}^{\infty}  e^{-n( cx^2-ix)} f_1(x)dx
\end{multline}
 where we used the fact that $n\tilde{h}(x)$ is small on the interval of integration (being at most of order $n\cdot n^{-p\beta}=n^{1-p\beta}$) to expand $e^{-n\tilde{h}(x)}$ in series: $e^{-n\tilde{h}(x)}\asymp 1-n\tilde{h}(x)$. Using again the fact that the integral outside $[-n^{-p},n^{-p}]$ is exponentially small, we extend the integral over the full real line ($\tilde{h}\tilde{\phi}$ extends naturally by zero), via \eqref{eq:regcond} and the notation of \eqref{f0f1}.
 
 We have
 $$\int_{-\infty}^{\infty}  e^{-n( cx^2-ix)} f_0(x)dx=\frac 1{\sqrt{2\pi}} \int_{-\infty}^{\infty} e^{-n( cx^2-ix)}dx \int_{-\infty}^\infty e^{ikx}\hat{f}_0(k) dk$$ 
Since $\beta_0,\beta_1>1$, we can interchange the order of integration  to get
 \begin{equation}
 \label{intgs}
 \frac 1{\sqrt{2\pi}} \int_{-\infty}^\infty e^{ikx}\hat{f}_0(k) dk \int_{-\infty}^{\infty} e^{-n( cx^2-ix)+ikx} dx= \frac 1{\sqrt{2\pi}} \int_{-\infty}^\infty \hat{f}_0(k) \sqrt{\frac \pi{cn}}e^{-\frac{(k+n)^2}{4cn}} dk
 \end{equation}
 
 Note that the integral is exponentially small outside an interval $|k+n|<n^q$ since $\tfrac12<q<1$. Therefore  (once we show the main behavior is power-like)
 the last term in \eqref{intgs} is asymptotic to
 $$ \frac 1{\sqrt{2cn}} \int_{-n-n^q}^{-n+n^q} \hat{f}_0(k) e^{-\frac{(k+n)^2}{4cn}} dk $$
 The second integral in \eqref{kapan} is evaluated similarly, leading to \eqref{eq:asJn0}.

 \end{proof}

 \subsection{Proof of Theorem \ref{Thm1}, part 2.}\label{ST1P2}

\begin{Lemma}\label{Genform}
Under the further assumptions \eqref{eq:regcond} $J_n$ in \eqref{eq:asJn0} satisfies
 \begin{equation} \label{eq:asJn}
  J_n\asymp e^{i(n+1/2)\theta_0}\int_{-n^{-p}}^{n^{-p}}  e^{-n( cx^2-ix)} \left[ 1-nh(x)\right]  \tilde{g}(x) \phi(x)dx 
\end{equation}
if $p$ is any number such that $\tfrac 1{\beta}<p<\tfrac 12$.
\end{Lemma}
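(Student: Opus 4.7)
The claim is essentially the intermediate step of the asymptotic chain displayed in \eqref{kapan} in the proof of Lemma \ref{Genform1}: that computation already produces the stated integral as a waypoint on its way from $J$ to its Fourier representation. My plan, therefore, is not to run a new calculation but to re-verify that each $\asymp$ in that chain remains valid under the stronger pointwise hypothesis \eqref{eq:regcond}, and to read off the integral form at the intermediate step rather than the Fourier form at the end.

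First I would localize the integration defining $J$ to a shrinking window $[-n^{-p}, n^{-p}]$ with $1/\beta < p < 1/2$. Because $\tilde F(x) = cx^2 + \tilde h(x)$ with $\tilde h(x) = O(|x|^\beta)$ and $\beta > 2$, one has $\tilde F(x) \ge \tfrac{c}{2} x^2$ on a small neighborhood of $0$, so outside the window $|e^{-n\tilde F(x)}| \le e^{-cn^{1-2p}/2}$, which is superpolynomially small since $p < 1/2$; the excluded tail in $J$ is correspondingly negligible. Within the window, $n|\tilde h(x)| \le C n^{1-p\beta} \to 0$ (since $p\beta > 1$), so the Taylor expansion $e^{-n\tilde h(x)} = 1 - n\tilde h(x) + O(n^{2-2p\beta})$ is legitimate. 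Inserting the cutoff $\phi$ is free on this window once $n^{-p} < \epsilon$, and re-extending the range back to the original support of $\phi$ only reintroduces the same exponentially small tail.

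The only delicate point — the one place where \eqref{eq:regcond} does real work, rather than the weaker bound \eqref{Linf} — is that the $\asymp$ notation requires the two sides to share a definite polynomial order in $n$, not merely an inequality. Under \eqref{Linf} alone the chain could in principle collapse into a tautological $0 \asymp 0$ if accidental cancellation happened. The pointwise hypothesis \eqref{eq:regcond}, fed into the Gaussian-against-$\hat f_j$ convolution shown in \eqref{intgs}, produces explicit leading terms of exact order $a_0 n^{-1/2-\beta_0}$ and $a_1 n^{1/2-\beta_1}$ respectively. These genuine polynomial sizes dominate the $O(n^{2-2p\beta})$ truncation error once $p$ is taken sufficiently close to $1/2$ (feasible since $\beta > 2$), so every $\asymp$ in \eqref{kapan} is legitimate and \eqref{eq:asJn} follows. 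The main obstacle, then, is not the calculation itself but precisely this non-vanishing check; all other estimates are standard Watson/Laplace type and already executed in the proof of Lemma \ref{Genform1}.
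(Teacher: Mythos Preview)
Your proposal is correct and follows essentially the same route as the paper: both arguments recognize that the integral in \eqref{eq:asJn} is an intermediate node in the chain \eqref{kapan}, and both use the pointwise hypothesis \eqref{eq:regcond} to evaluate the Gaussian convolutions against $\hat f_0,\hat f_1$ explicitly, thereby establishing a definite power-law size that legitimizes the $\asymp$ symbols. One small slip: the orders you quote should be $n^{-\beta_0}$ and $n^{1-\beta_1}$ rather than $n^{-1/2-\beta_0}$ and $n^{1/2-\beta_1}$, since the $1/\sqrt{2cn}$ prefactor cancels against the $2\sqrt{cn\pi}$ from the Gaussian integral; this does not affect the logic.
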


\begin{proof}
Under the more precise assumptions \eqref{eq:regcond} we can evaluate further the right hand side of \eqref{eq:asJn0} in Lemma\,\ref{Genform1}.

 We have
  \begin{multline}
 \frac 1{\sqrt{2cn}} \int_{-n-n^q}^{-n+n^q} \hat{f}_0(k) e^{-\frac{(k+n)^2}{4cn}} dk = \frac 1{\sqrt{2cn}} \int_{-n^q}^{n^q} \hat{f}_0(\xi-n) e^{-\frac{\xi^2}{4cn}} dk \\
  \asymp \frac {a_0n^{-\beta_0}}{\sqrt{2cn}} \int_{-n^q}^{n^q} e^{-\frac{ \xi^2}{4cn}} dk\asymp \frac {a_0n^{-\beta_0}}{\sqrt{2cn}} \int_{-\infty}^{\infty} e^{-\frac{\xi^2}{4cn}} dk 
 =\frac {a_0n^{-\beta_0}}{\sqrt{2cn}}  2\sqrt{cn\pi}=   \frac{\sqrt{2\pi}a_0 }{n^{\beta_0}}
 \end{multline}
where  we used the fact that, by \eqref{eq:regcond}, $\hat{f}_0(\xi-n)\asymp a_0 n^{-\beta_0}$.

 Similarly
 $$ \frac 1{\sqrt{2cn}} \int_{-n-n^q}^{-n+n^q} \hat{f}_1(k) e^{-\frac{(k+n)^2}{4cn}} dk  \asymp  \frac{\sqrt{2\pi}a_1 }{n^{\beta_1}}$$
 and therefore the integral $J$ in \eqref{eq:defJ} has the behavior
 \begin{equation}
 \label{asyJW}
 J=e^{i(n+1/2)\theta_0}(J_-+J_+)\asymp e^{i(n+1/2)\theta_0}\left(  \frac{\sqrt{2\pi}a_0 }{n^{\beta_0}}-  \frac{\sqrt{2\pi}a_1 }{n^{\beta_1-1}}\right)
 \end{equation}
 unless $\beta_0= \beta_1-1$ and $a_0=a_1$.
 \end{proof}
 
 Relation \eqref{eq:asympt-beh} follows from \eqref{asyJW} and \eqref{eq:asymp1}.

  \subsection{Proof of Theorem\,\ref{Thm1} part 1.}\label{Thm1p1}
  
  Relation \eqref{eq:asympt-beh} implies \eqref{eq:limsup} unless  $\beta_0= \beta_1-1$ and $a_0=a_1$. We will now show that the limit in \eqref{eq:limsup} vanishes only exceptionally, in a sense defined precisely below.

Denote $\phi L^2=\{\phi f\,|\,f\in L^2(\RR)\}$ and $\mathcal{T}=\mathcal{F}(\phi L^2)$, two  closed subspaces of $ L^2(\RR)$. Define $\|F\|_\beta=\sup_{k\in\RR} (1+|k|)^\beta |F(k)|$ for some $\beta>1$. Let $\mathcal{H}_\beta=\{F\in \mathcal{T}\,|\, \|F\|_\beta<\infty\}$. This space is non-null, see section\,\ref{append}. Finally, let $q\in(\tfrac12,1)$ and define $\kappa_n$ on $\mathcal{H}_\beta$ by
$$\kappa_n(F)=\Re \left( e^{i(n+\frac12)\theta_0-i\pi/4} \frac1{\sqrt{2cn}}  \int_{-n-n^q}^{-n+n^q} F(k)\, e^{-\frac{(k+n)^2}{4cn}} dk \right)$$
We have 
\begin{multline}\label{estimkappan}
|\kappa_n(F)|\leq \frac1{\sqrt{2cn}}  \int_{-n-n^q}^{-n+n^q} |F(k)|\, e^{-\frac{(k+n)^2}{4cn}} dk \leq  \frac1{\sqrt{n}} \|F\|_\beta \int_{-n-n^q}^{-n+n^q} \frac 1{(1+|k|)^\beta }\, e^{-\frac{(k+n)^2}{4cn}} dk\\
\leq \frac1{\sqrt{2cn}} \, \|F\|_\beta\, n^{-\beta}\left[ 1+O(n^{-(\beta-q)}\right] \int_{-n-n^q}^{-n+n^q} e^{-\frac{(k+n)^2}{4cn}} dk\leq \sqrt{2\pi}  \|F\|_\beta\, n^{-\beta}\left[ 1+O\left(n^{-(\beta-q)}\right)\right] 
\end{multline}
\begin{Lemma}\label{LKer}
  The linear functionals $L_+^\beta$ and $L_-^\beta$ defined on $\mathcal{H}_\beta$ by 
  $$L_+^\beta(F)=\limsup_{n\to\infty} n^\beta \kappa_n(F),\ \ \ L_-^\beta(F)=\liminf_{n\to\infty} n^\beta \kappa_n(F)$$
  are continuous and nonzero. The closed subspace $K^\beta$ of $\mathcal{H}_\beta$ defined by $K^\beta=\text{Ker}L_+^\beta \cap\text{Ker}L_-^\beta $ is nowhere dense in $\mathcal{H}_\beta$.
\end{Lemma}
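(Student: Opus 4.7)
The statement splits into three parts: (a) continuity of $L_\pm^\beta$; (b) non-triviality, i.e., these functionals are not identically zero; (c) $K^\beta$ is a closed proper linear subspace of $\mathcal{H}_\beta$, hence nowhere dense.

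For (a), the key point is that $F\mapsto \kappa_n(F)$ is real-linear. Applying the a priori estimate \eqref{estimkappan} to $F-G$ gives, uniformly in $n$,
\[
|n^\beta\kappa_n(F)-n^\beta\kappa_n(G)|\le \sqrt{2\pi}\,\|F-G\|_\beta\,\bigl(1+O(n^{-(\beta-q)})\bigr).
\]
Passing to $\limsup$ and $\liminf$ and using their standard sub/super-additivity then yields the Lipschitz bound $|L_\pm^\beta(F)-L_\pm^\beta(G)|\le \sqrt{2\pi}\|F-G\|_\beta$.

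For (b), I construct a prototype $F_\ast\in\mathcal{H}_\beta$ saturating the decay, namely $F_\ast(k)=a(-k)^{-\beta}(1+o(1))$ as $k\to-\infty$, with $a\ne 0$ of freely chosen phase. Concretely, take $F_\ast=\mathcal{F}(c\,\phi\cdot x_+^{\beta-1})$: since $\beta>1$ the factor $x_+^{\beta-1}$ lies in $L^2_{\rm loc}$, so $c\,\phi\cdot x_+^{\beta-1}\in \phi L^2$, and the classical asymptotic $\mathcal{F}(x_+^{\beta-1})(k)=\tfrac{\Gamma(\beta)}{\sqrt{2\pi}}(ik)^{-\beta}$ survives multiplication by the smooth cutoff $\phi$ up to corrections decaying faster than any polynomial (integrate by parts once against the smooth, compactly supported piece $\phi-1$ near the origin). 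Varying $\arg c$ rotates $\arg a$ freely. Repeating the Gaussian-integral computation of Lemma~\ref{Genform} verbatim on this $F_\ast$ (substitute $\xi=k+n$; use $F_\ast(\xi-n)\sim a n^{-\beta}$ uniformly on $|\xi|\le n^q$; extend the Gaussian integral to $\RR$) yields
\[
n^\beta\kappa_n(F_\ast)=\sqrt{2\pi}\,\Re\bigl(a\,e^{i(n+1/2)\theta_0-i\pi/4}\bigr)+o(1).
\]
Because $\theta_0\in(0,\pi)$, the residues $(n+\tfrac12)\theta_0\bmod 2\pi$ are either equidistributed (if $\theta_0/\pi\notin\QQ$) or periodic through finitely many values (if $\theta_0/\pi\in\QQ$); in either situation, for a suitable choice of $\arg a$ the real part above attains both strictly positive and strictly negative cluster values, so $L_+^\beta(F_\ast)>0>L_-^\beta(F_\ast)$. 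Both functionals are therefore nonzero.

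For (c), $K^\beta=\{F\in\mathcal{H}_\beta\,:\,\lim_{n\to\infty} n^\beta\kappa_n(F)=0\}$ is a linear subspace by linearity of $\kappa_n$ in $F$, closed by the Lipschitz bound of step (a), and proper because $F_\ast\notin K^\beta$. Any proper closed linear subspace of a normed space is nowhere dense: an interior point would, after translation to $0$ and rescaling, produce an open ball about $0$ inside the subspace, forcing the subspace to exhaust the ambient space. The only non-routine ingredient is the explicit construction of $F_\ast$ in step (b), where one must exhibit a compactly supported element of $\phi L^2$ whose Fourier transform has the prescribed one-sided power decay; this is a standard application of the Fourier asymptotics of endpoint power singularities, and I expect it to be the main technical step, with the remaining arguments being routine linear-functional bookkeeping.
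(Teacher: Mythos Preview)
Your proof is correct and follows essentially the same route as the paper: continuity from the a priori bound \eqref{estimkappan}, non-triviality by exhibiting an $F$ with $\lim_{k\to-\infty}(-k)^\beta F(k)=a\ne 0$ and recycling the Gaussian computation of Lemma~\ref{Genform}, and nowhere-density from the fact that $K^\beta$ is a closed proper linear subspace. Your explicit construction $F_\ast=\mathcal{F}(c\,\phi\,x_+^{\beta-1})$ is a one-sided variant of the $|x|^{\beta-1}P\phi$ example the paper builds in the Appendix, and your attention to the phase of $a$ (handling rational versus irrational $\theta_0/\pi$) is in fact more careful than the paper's terse assertion that $a_F\ne 0$ already forces $L_\pm^\beta(F)\ne 0$.
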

\begin{proof}

The estimate \eqref{estimkappan} shows that $L_+^\beta,L_-^\beta$ are continuous on $\mathcal{H}_\beta$. Therefore $\text{Ker}L_+^\beta$ and $\text{Ker}L_-^\beta$ are closed linear subspaces in $\mathcal{H}_\beta$.

To show that $K^\beta:=\text{Ker}L_+^\beta \cap\text{Ker}L_-^\beta$ is a proper subspace, consider the linear subspace of $\mathcal{H}_\beta$ for which 
 $\lim_{k\to -\infty}(-k)^\beta F(k)=a_F$. The same calculation as in the proof of Lemma\,\ref{Genform} shows that if $a_F\ne 0$ then $L_\pm^\beta(F)\ne 0$, hence  $K^\beta$ is a closed proper subspace of $\mathcal{H}_\beta$ completing the proof of Lemma \ref{LKer}.
  \end{proof}

 Consider functions $g,h$ for which  $\limsup |C_n|R^{-n-3}n^{\frac32+\beta_m}=0$, so $\limsup_n C_nR^{-n-3}n^{\frac32+\beta_m}=0=\liminf_n C_nR^{-n-3}n^{\frac32+\beta_m}$.
 
 By Lemma\,\ref{Genform1} we have
 \begin{equation}
    \label{Cn}
    C_n R^{-n-3}n^{\frac 32}\frac{\sqrt{\pi}}{\sqrt{2}}=\kappa_n(\hat{f}_0)-n\kappa_n(\hat{f}_1)+e_n
     \end{equation}
      where $e_n$ is exponentially small.
       Due to \eqref{Linf}, from \eqref{estimkappan} we have $|\kappa_n(\hat{f}_0)|<C_0n^{-\beta_0}$ and $|\kappa_n(\hat{f}_1)|<C_1n^{-\beta_1}$ for some constants $C_0,C_1$.

       If $\beta_0<\beta_1-1$ then $\beta_m=\beta_0$. Multiplying \eqref{Cn} by $n^{\beta_m}$ and taking the $\limsup$, respectively $\liminf$,  we see that $L_+( \hat{f}_0)=L_-(\hat{f}_0)=0$, therefore $\hat{f}_0\in K$. Similarly, if $\beta_0<\beta_1-1$ then $\hat{f}_1\in K$ and if $\beta_0=\beta_1-1$ then $\hat{f}_0-\hat{f}_1\in K$. In any case, this faster decrease of $C_n$ is only possible for functions which, by Lemma \ref{LKer}, form a nowhere dense set.
\vskip.1in
In fact the methods above yield the following stronger topological result.

\begin{Lemma} For any pair $\alpha_2 > \alpha_1 > 1$ the image $im(\iota_{12})$ of the inclusion $\iota_{12}:\mathcal{H}_{\alpha_2}\hookrightarrow\mathcal{H}_{\alpha_1}$ is a closed subspace of $\mathcal{H}_{\alpha_1}$ of infinite codimension.
\end{Lemma}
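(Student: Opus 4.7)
My approach is to construct an explicit infinite sequence $\{F_j\}_{j\in\NN}\subset\mathcal{H}_{\alpha_1}$ whose cosets modulo $im(\iota_{12})$ are linearly independent (yielding infinite codimension), and separately to exhibit $im(\iota_{12})$ as contained in the preimage of a closed subspace under a continuous linear map built from the functionals $\kappa_n$ of Lemma~\ref{LKer}. Pick an infinite sequence $\{x_j\}_{j\in\NN}$ of distinct points in the interior of $\{\phi\equiv 1\}$ and a fixed $\chi\in C^\infty_c(\RR)$ with support so small that each $\chi(\cdot-x_j)$ lies inside $\{\phi\equiv 1\}$. Set
\[
f_j(x):=\chi(x-x_j)\,|x-x_j|^{\alpha_1-1},\qquad F_j:=\mathcal{F}f_j.
\]
Since $\alpha_1>1$ each $f_j$ is continuous with compact support, so $f_j\in\phi L^2$ and $F_j\in\mathcal{T}$. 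The classical Fourier asymptotics for functions with an isolated algebraic singularity give
\[
F_j(k)=C_{\alpha_1}\,e^{-ikx_j}\,|k|^{-\alpha_1}\bigl(1+o(1)\bigr)\quad(|k|\to\infty),
\]
whence $F_j\in\mathcal{H}_{\alpha_1}\setminus\mathcal{H}_{\alpha_2}$.

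Next I check that the classes $[F_j]\in\mathcal{H}_{\alpha_1}/im(\iota_{12})$ are linearly independent. If $\sum_{j=1}^N c_j F_j\in im(\iota_{12})=\mathcal{H}_{\alpha_2}$, then $|\sum c_j F_j(k)|=O(|k|^{-\alpha_2})$, and dividing through by $C_{\alpha_1}|k|^{-\alpha_1}$ gives
\[
\Bigl|\sum_{j=1}^N c_j\,e^{-ikx_j}\Bigr|=O(|k|^{\alpha_1-\alpha_2})\longrightarrow 0.
\]
The left-hand side is an almost-periodic trigonometric polynomial whose Bohr mean-square is $\sum_{j=1}^N|c_j|^2$, so it cannot decay at infinity unless every $c_j$ vanishes. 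Hence $\dim\bigl(\mathcal{H}_{\alpha_1}/im(\iota_{12})\bigr)=\infty$, which is the stronger half of the claim.

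For the closedness assertion I would exploit the functionals from Lemma~\ref{LKer}: \eqref{estimkappan} says $F\mapsto n^{\alpha_1}\kappa_n(F)$ is continuous on $\mathcal{H}_{\alpha_1}$ with operator norm uniformly bounded in $n$, while for $F\in\mathcal{H}_{\alpha_2}$ the improved estimate $|\kappa_n(F)|\le C\|F\|_{\alpha_2}n^{-\alpha_2}$ forces $n^{\alpha_1}\kappa_n(F)\to 0$. Hence $\Pi:F\mapsto(n^{\alpha_1}\kappa_n(F))_{n\ge 1}$ is a continuous linear map $\mathcal{H}_{\alpha_1}\to\ell^\infty$ sending $im(\iota_{12})$ into the closed subspace $c_0$. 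To complete the closedness proof one must identify the joint preimage of $c_0$ under a sufficient family of such maps---obtained by replacing the reference phase $e^{i(n+1/2)\theta_0}$ inside $\kappa_n$ by $e^{i(n+1/2)\theta}$ for $\theta$ ranging over a dense subset---with $im(\iota_{12})$ itself. This Tauberian-style recovery of the pointwise $|k|^{-\alpha_2}$ decay from the Gaussian-weighted averages $\kappa_n^{(\theta)}$ is the main obstacle I anticipate; it needs a careful use of the Paley--Wiener compact-support constraint built into $\mathcal{T}=\mathcal{F}(\phi L^2)$ to rule out intermediate profiles of decay rate strictly between $\alpha_1$ and $\alpha_2$ that would otherwise pass the $c_0$-test at every $\theta$.
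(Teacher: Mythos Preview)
Your argument for infinite codimension is correct and takes a more concrete route than the paper's. Where the paper threads $\iota_{12}$ through an infinite chain of intermediate spaces $\mathcal{H}_{\gamma_n}$ (with $\alpha_1<\gamma_1<\gamma_2<\cdots<\alpha_2$), splitting off a copy of $\RR^2$ at each stage via the split short exact sequence $0\to K^{\gamma_n}\to\mathcal{H}_{\gamma_n}\to\RR^2\to 0$ coming from Lemma~\ref{LKer} and then passing to the inverse limit, you instead manufacture explicit representatives $F_j$ with algebraic singularities planted at distinct points $x_j$ and separate their cosets by the Bohr mean-square of the almost-periodic sum $\sum c_j e^{-ikx_j}$. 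Your argument is self-contained and does not touch the functionals $\kappa_n$ at all for this half; the paper's is more structural but packages the codimension and the topological splitting together.

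For closedness, the gap you flag is genuine and I do not see how to close it along your lines. The forward inclusion $\Pi(im(\iota_{12}))\subset c_0$ is immediate, but the converse---that every $F\in\mathcal{H}_{\alpha_1}$ with $n^{\alpha_1}\kappa_n^{(\theta)}(F)\to 0$ for all phases $\theta$ already lies in $\mathcal{H}_{\alpha_2}$---is a hard Tauberian statement. The Gaussian window in $\kappa_n$ has width $\sim\sqrt{n}$, so a profile such as $|k|^{-\alpha_1}(\log|k|)^{-1}$ (which can be realized in $\mathcal{T}$ by the same mechanism as in the Appendix, using an $|x|^{\alpha_1-1}\log|x|$--type singularity) would pass your $c_0$-test at every $\theta$ while lying outside $\mathcal{H}_{\alpha_2}$. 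The Paley--Wiener constraint forces $F$ to be entire of exponential type but imposes nothing on its real-axis decay beyond square-integrability, so it does not rescue the argument. The paper does not attempt to characterize $\mathcal{H}_{\alpha_2}$ from the outside at all; it derives closedness from the factorization through the closed subspaces $K^{\gamma_n}$, each of which is closed in $\mathcal{H}_{\gamma_n}$ by the continuity of $L_\pm^{\gamma_n}$ established in Lemma~\ref{LKer}.
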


\begin{proof} For any $\alpha_2 > \alpha >\alpha' > \alpha_1$, the proof of Lemma\,\ref{LKer} above identifies $\mathcal{H}_\alpha$ with a proper closed subspace of the codimension 2 subspace $K^{\alpha'}$ of $\mathcal{H}_{\alpha'}$. Additionally, the short-exact sequence of Banach spaces
\[
0\to K^\gamma\to \mathcal{H}_{\gamma}\to \mathbb R^2\to 0
\]
is topologically split for all $\gamma > 1$. Fix an increasing sequence $\{\gamma_n\}_{n\ge 1}$ with $\alpha_1 < \gamma_1 < \gamma_2 <\dots < \gamma_n <\dots < \alpha_2$. For each $n$ this split exact sequence yields a factorizion
\[
\mathcal{H}_{\gamma_{n+1}}\xrightarrow{i_1}\mathcal{H}_{\gamma_{n+1}}\oplus\mathbb R^2\hookrightarrow K^{\gamma_n}\oplus\mathbb R^2\cong \mathcal{H}_{\gamma_n}
\]
Passing to the inverse limit then yields the factorization of $\iota_{12}$ as
\[
\mathcal{H}_{\alpha_2}\xrightarrow{i_1} \mathcal{H}_{\alpha_2}\oplus\mathbb R^\infty\hookrightarrow
(\varprojlim_n \mathcal{H}_{\gamma_n})\oplus\mathbb R^\infty \hookrightarrow \mathcal{H}_{\alpha_1}
\]
where $\mathbb R^\infty = \varinjlim_n\mathbb R^{2n}$ is equipped with the inductive limit topology.
\end{proof}

 \subsection{Independence of the cut-off function $\phi$}

\begin{Proposition}\label{P6}
For $\epsilon>0$ small enough the estimate \eqref{asyJW}  is independent of the choice of the cutoff function $\phi$ as in the assumptions of Theorem\,\ref{Thm1} and of $\epsilon$.
\end{Proposition}

\begin{proof}

 Indeed, let $\phi_1$ be another cutoff function satisfying assumptions similar to $\phi$ (perhaps with $\epsilon_1$ instead of $\epsilon$). Then, for $-k$ large,
 $$\widehat{\phi_1f}=\hat{\phi_1}*\hat{f}=\int_{-\infty}^\infty \hat{\phi_1}(s)\hat{f}(k-s)\, ds  =\int_{-\sqrt{-k}}^{\sqrt{-k}} \hat{\phi_1}(s)\hat{f}(k-s)\, ds+\int_{|s|>\sqrt{-k}} \hat{\phi_1}(s)\hat{f}(k-s)\, ds$$
 
 Assume $\lim_{k\to -\infty}\hat{f}(k)(-k)^\beta=a$. For large $(-k)$ we have
 $$\int_{-\sqrt{-k}}^{\sqrt{-k}}  \hat{\phi_1}(s)\hat{f}(k-s)\, ds \sim \frac{a}{(-k)^\beta}\int_{-\sqrt{k}}^{\sqrt{k}} \hat{\phi_1}(s)ds \sim \frac{a}{(-k)^\beta}\int_{-\infty}^{\infty} \hat{\phi_1}(s)ds= \frac{a}{(-k)^\beta}\phi(0)= \frac{a}{(-k)^\beta}$$

 Let $N>2\beta+1$. Since $\hat{\phi_1}$ is a Schwartz function, it decays faster than any power, thus $|\phi(s)|<|s|^{-N}$ for  $|s|$ large enough.
 Hence
$$ |\int_{\sqrt{-k}}^\infty\hat{\phi_1}(s)\hat{f}(k-s)\, ds|\leq \|f\|_\infty  \int_{\sqrt{-k}}^\infty s^{-N}ds =\frac{\|f\|_\infty}{N-1}\frac 1{{(-k)}^{\frac{N-1}2}}   =o\left((-k)^{-\beta}\right)$$
Similarly, the integral on $(-\infty,-\sqrt{-k})$ is negligible for large $k$.

Hence, also $\lim_{k\to -\infty}\widehat{f\phi_1}(k)(-k)^\beta=a$.
This shows independence on the choice of $\epsilon$.

\end{proof}

\section{Proof of Theorem\,\ref{Thm2}}\label{ST2}
 
\begin{proof}
   By Abel's theorem, if the power series expansion of $V(z)$ in inverse powers of $z$  converges at some point $z_0> 0$, then $V$ is analytic at infinity, and in $\{z\in\CC:|z|>z_0\}$; in particular, if $z_0<1$ then $V$ is analytic at any point on the unit circle $\{z\in\CC:|z|=1\}$.
    
  We note that analyticity of $Q(p(z))$ in \eqref{eq:reproc1} at any $z\in\CC\setminus\{\pm i\}$ is equivalent to analyticity of $V$ at that point.
  
 Assume the SHE  converges at some point $z_0<1$.

  We use a method of singularity transformation introduced in \cite{CostinDunne}.
Define Laplace-type convolution by 
$$(f*g)(p)=\int_0^p f(s) g(p-s) ds$$
Let $\mathcal{L}$ be the Laplace transform. Noting that $\mathcal{L}(f*g) =\mathcal{L}(f) \mathcal{L}(g)$ it follows immediately that convolution is commutative and associative.
Consider the linear operator $A$ defined by
\begin{equation}
  \label{eq:defop}
  (Af)(p)=\sqrt{p}\frac{d}{dp}\Big[p^{-1/2} *f\Big]
\end{equation}
Let $f$ be a function whose  Maclaurin  series $\sum_{k\ge 0} c_k p^k$ converges in $\DD$.  We claim that $Af$ is also analytic in $\DD$. Since  $\mathcal{L}(p^k*p^{-1/2})=\mathcal{L}(p^k)\mathcal{L}(p^{-1/2})=\sqrt{\pi}\Gamma(k+1)x^{-k-3/2}$, it follows, by taking the inverse Laplace transform, that
$$p^{-1/2} *p^k=\sqrt{\pi}p^{k+1/2}\frac{\Gamma(1+k)}{\Gamma(k+3/2)}$$
Differentiating and using dominated convergence in $\DD$ to integrate term by term, we get 
 \begin{equation}
    \label{eq:eqps}
     (Af)(p)=\sqrt{\pi}\sum_{k=0}^\infty \frac{\Gamma(1+k)}{\Gamma(k+\frac12)}c_k p^k
   \end{equation}
   which is also (manifestly) convergent in $\DD$. Using the binomial formula,
   $$(1-px)^{-1/2}=\sum_{k=0}^\infty \frac{\Gamma \left(k+\frac{1}{2}\right)}{\sqrt{\pi } k!}p^k x^k,\ \ \ |px|<1$$
 it follows that, if $|px|<1$ we have
   $$A((1-px)^{-1/2})=\sum_{k=0}^\infty (px)^k=\frac{1}{1-px}$$
  Let $\mathcal{D}$ be any star-shaped domain in $\CC$ (meaning it contains the origin, and together with any point
   $p_0\in\mathcal{D}$ it contains the straight line segment joining $0$ to $p_0$). Assume $\DD\subset\mathcal D$, and let $f$ be a function analytic in $\mathcal{D}$. Then, $Af$ is also analytic in
   $\mathcal D$. Indeed, changing variable $s=p t$ we have
 $$   (Af)(p)=\sqrt{p}\frac{d}{dp}\int_0^p \frac{f(s)\, ds}{\sqrt{p-s}} =\sqrt{p}\frac{d}{dp}\left[\sqrt{p}   \int_0^1\frac{f(p t)}{\sqrt{1-t}}dt \right]$$
By standard analytic dependence with respect to parameters,  $Af$ is analytic where $f$ is.
 
 Denote $f_x(p)=(1-px)^{-1/2}$. For any $x\in [-1,1]$, $f_x$ is analytic on the star-shaped domain $p^{-1}\notin [-1,1]$. Hence $A(f_x)$ is analytic on the same domain. Since for $|px|<1$ we have $A(f_x)(p)=(1-px)^{-1}$, it means that
$$A(f_x)=\frac{1}{1-px}\ \ \ \ \text{for\, all\ } p\ \text{with\ } p^{-1}\notin [-1,1]$$
Now we apply $A$ to $Q$: for $p^{-1}\notin [-1,1]$ we have
\begin{equation}
  \label{eq:AG}
  (AQ)(p)=\sqrt{p}\frac{d}{dp}\int_0^p \frac{ds}{\sqrt{p-s}}\int_{-1}^1\frac{\mu(x) dx}{\sqrt{1- p x}}
= \int_{-1}^1 \mu(x) A(f_x)(p)\, dx    =    \int_{-1}^1 \frac{\mu(x)dx}{1-p x}
\end{equation}
and, by  analytic dependence on parameters, $AQ$ is
analytic in $p$ if $p^{-1}\notin [-1,1]$. 

On the other hand, if $p^{-1}\in (-1,0)\cup (0,1)$ then $z$ is on the unit circle $\mathbb{T}$ (see \eqref{eq:balayage1}). Since $V$ is analytic in a neighborhood of $\TT$, $Q(p)$ is analytic for $p^{-1}$ in  a neighborhood of $(-1,0)\cup (0,1)$.
  For  $p\ne 0$, with  $\zeta =1/p$ we get
\begin{equation}
 \label{eq:AG1}
  (AQ)(\zeta)=\zeta \int_{-1}^1 \frac{\mu(x)dx}{\zeta-x}
\end{equation}
Analyticity in $\zeta$ 
in  a neighborhood of $(-1,0)\cup(0,1)$ implies the existence of analytic continuation through $(-1,0)\cup(0,1)$ from both the upper and the
lower half plane.
By Plemelj's formulas, see \cite{Ablowitz}, if the limits of $AQ$ on
$(-1,0)\cup(0,1)$ from above and below are  $AQ^+$ and $AQ^-$ resp., then
\begin{equation}
  \label{eq:plemelj1}
  AQ^+(x)-AQ^-(x)=-2\pi i x \mu(x)
\end{equation}
Since $AQ^+$ and $AQ^-$ are analytic in a neighborhood of $(-1,0)\cup(0,1)$, $\mu$ is analytic in a neighborhood of  $(-1,0)\cup(0,1)$, in particular at
$x=\cos\theta_0$.

The converse is proved by standard deformation of the contour of integration, $[-1,1]$. 
\end{proof}

{\bf Note.} The case $\theta_0=0$ would not be special if we had first proved a variation of Plemelj's formulas adapted to a square root kernel. However, we are looking at generic cases, and this would have complicated the proof to cover just one more point.

 \section{Proof of Theorem\,\ref{Thm3}}\label{ST3}
 
 We use the notations \eqref{notx}.

\subsection{Proof of (i): the case $\alpha\in(0,1)$}

Using Lemma\,\ref{L1} we write $J$ in \eqref{eq:defJ} as

\begin{multline}
  \label{eq:defJnew}
  J=e^{i(n+1/2)\theta_0}(J_-+J_+), \ \ \text{where}\\
  J_-=\int_{-\delta}^0 \tilde{g}(x)e^{-(n+3) \tilde{F}(x)} e^{i(n+1/2)x}dx,\ \ \ \ \ \ \ J_+=\int_0^{\delta} \tilde{g}(x)e^{-(n+3) \tilde{F}(x)} e^{i(n+1/2)x}dx
\end{multline}

 For $x>0$ both \eqref{asumgka} and \eqref{asumgkb} have the form $\tilde{g}(x)=cx^{ k}(1+\tilde{h}(x))$ (with $c=g_k$ in the case of \eqref{asumgka}, and $c=g_+$ in the case of \eqref{asumgkb}).
Then 
\begin{equation}
 \label{eq:defJ1}
J_+= c J_{k,+} + o\left(|J_+|\right),\ \ \ \text{where\ \ }J_{k,+}=\int_0^{\delta} x^k e^{-(n+3) \tilde{F}(x)} e^{i(n+1/2)x}dx
\end{equation}

We will show that $J_{k,+}\ne 0$ which will imply that $\tilde{h}$ will not contribute to the leading asymptotics of $J_+$ indeed.

Let $p$ be any number such that 
$$ \max\left\{1, \frac 1{\beta}\right\}<p<\frac 1\alpha$$

We further break the interval $[0,\delta]$ into $[0,n^{-p}]$ and  $[n^{-p},\delta]$. The integral over the latter interval is exponentially small:
$$\big| \int_{n^{-p}}^\delta x^k e^{-(n+3) \tilde{F}(x)} e^{i(n+1/2)x}dx\big|  \le \int_{n^{-p}}^\delta x^k e^{-(n+3) (a_+x^\alpha +O(x^\beta))} \le \delta\,  n^{-pk}e^{- a_+n^{1-p\alpha}} $$

In the integral over $[0,n^{-p}]$, the term $nO(x^{\beta})$ in $n\tilde{F}$  (see \eqref{assumeF}), is small  since $nO(x^{\beta})\to 0$ as $n\to\infty$ and there the exponential $e^{nO(x^\beta)}$ can be expanded in series, $e^{nO(x^\beta)}=1+O(n^{1-p\beta})\asymp 1$. Similarly,   $e^{inx}=1+O(n^{1-p})\asymp 1$ and  $e^{-3a_+x+i/2x}=1+O(n^{-p})$.
Thus we  obtain a power asymptotic behavior, as we claimed:
 \begin{equation}
\label{Jkpg}
J_{k,+} \asymp \int_0^{n^{-p}} x^{k} e^{-n a_+x^\alpha  }dx \asymp {\alpha^{-1} a_+^{-\frac {k+1}\alpha}} \frac {\Gamma(\frac {k+1}\alpha)}{n^{\frac {k+1}\alpha}}
\end{equation}
where the last expression is obtained by changing the variable of integration $a_+x^\alpha=u$, noting that the integral differs from the integral over $[0,\delta]$ by exponentially small terms and then using Watson's Lemma (\cite{Watson}, and for this particular form see Lemma\, 3.37 in \cite{Book}).

For $x<0$ and $g$ of the form \eqref{asumgka} we similarly have
\begin{equation}
 \label{eq:defJ1m}
J_-=g_k J_{k,-}+ o\left(|J_{-}|\right),\ \ \ \text{with\ \ }J_{k,-}=\int_{-\delta}^0 x^k e^{-(n+3) \tilde{F}(x)} e^{i(n+1/2)x}dx
\end{equation}
Once we show that $J_{k,-}\ne 0$, it follows that the contribution of $\tilde{h}$ is of $o(J_-)$.

The substitution  $x\mapsto -x$ brings the calculation of $J_{k,-}$ to the one for $x>0$:
$$J_{k,-}=(-1)^k \int_0^{\delta} x^{k} e^{-(n+3) \tilde{F}(-x)} e^{-i(n+1/2)x}dx$$
Hence, we simply have to substitute $a_+\mapsto a_-$ and $i\mapsto -i$ in \eqref{Jkpg} to obtain
\begin{equation}
\label{Jkpgm}
J_{k,-}\asymp (-1)^k {\alpha^{-1} a_-^{-\frac {k+1}\alpha}} \frac {\Gamma(\frac {k+1}\alpha)}{n^{\frac {k+1}\alpha}}
\end{equation}
which added to \eqref{Jkpg}, multiplying by $c=g_k$, and by the exponential prefactor in \eqref{eq:defJnew}, and the factors in \eqref{eq:asymp1} yields \eqref{eq:asymp10a}.

The calculation for $g$ in the form \eqref{asumgkb} is similar, only the prefactor $(-1)^k$ is missing:
\begin{equation}
\label{Jkpgmb}
J_{k,-}\asymp {\alpha^{-1} a_-^{-\frac {k+1}\alpha}} \frac {\Gamma(\frac {k+1}\alpha)}{n^{\frac {k+1}\alpha}}  
\end{equation}
which multiplied by $g_-$, then added to \eqref{Jkpg} multiplied by $g_+$, then multiplied by the other factors as explained above yields \eqref{eq:asymp10aa1}.

\subsection{Proof of (i): the case $\alpha=1$}

The proof for $\alpha=1$ is similar: let $q$ be a number such that $\beta^{-1}<q<1$ and we break the interval $[0,\delta]$ in \eqref{eq:defJ1} into $[0,n^{-q}]$ and  $[n^{-q},\delta]$. The integral over the latter interval is exponentially small:
$$\bigg| \int_{n^{-q}}^\delta x^k e^{-(n+3) \tilde{F}(x)} e^{i(n+1/2)x}dx\bigg|  \le \int_{n^{-q}}^\delta x^k e^{-(n+3) (a_+x +O(x^\beta))} dx \le \delta\,  n^{-qk}e^{- a_+n^{1-q}} $$

In the integral over $[0,n^{-q}]$, the term $nO(x^{\beta})$ in $n\tilde{F}$  (see \eqref{assumeF}), is small  since $nO(x^{\beta})\to 0$ as $n\to\infty$ and there the exponential $e^{nO(x^\beta)}$ can be expanded in series, $e^{nO(x^\beta)}=1+O(n^{1-q\beta})\asymp 1$. Similarly,  $e^{-3a_+x+i/2x}=1+O(n^{-q})$.
Thus we  obtain a power asymptotic behavior:
\begin{equation}
\label{Jkpgais1}
J_{k} \asymp \int_0^{n^{-q}} x^{k} e^{-n (a_+-i)x  }dx \asymp  a_+^{-(k+1)} \frac {\Gamma(k+1)}{n^{k+1}}
\end{equation}
where the last expression is obtained by changing the variable of integration $(a_+-i)x=u$, noting that the integral differs from the integral over $[0,\delta]$ by exponentially small terms and then using Watson's Lemma.

The rest of the details are similar to the case $\alpha\in(0,1)$.

\subsection{Proof of (ii): the case $\alpha\in(1,2)$}

 We write $J$ in \eqref{eq:defJ} as

\begin{equation}
  \label{Jcase3}
  J=e^{i(n+1/2)\theta_0}J_1, \  \ \ \  \text{where\ } J_1=\int_{-\delta}^\delta e^{-n[\tilde{F}(x)-ix]} f(x)\, dx,\ \ \ \ 
  f(x)= \tilde{g}(x)e^{-3 \tilde{F}(x)} e^{ix/2}dx
\end{equation}

We have 
$$\int_0^\delta e^{-n[\tilde{F}(x)-ix]} f(x)\, dx= \int_0^\delta e^{-n[a_+x^\alpha-ix]} f(x)\, dx\asymp  \int_0^\infty e^{-n[a_+x^\alpha-ix]} f(x)\, dx$$
where the last relation holds since the two integrals differ by exponentially small terms and we show below that the last integral has power behavior.

Indeed, we have
$$ \int_0^\infty e^{-n[a_+x^\alpha-ix]} f(x)\, dx=\int_\mathcal{C} e^{-nu} \frac{f(x(u))}{\alpha a_+x^{\alpha-1}-i}\, du=\int_0^\infty e^{-nu} \frac{f(x(u))}{\alpha a_+x^{\alpha-1}-i}\, du$$
where we changed the variable of integration to $u=a_+x^\alpha-ix$ and $\mathcal{C}$ is a path in the fourth quadrant stating at the origin; since the integrand is singular only on the positive imaginary axis, the path of integration can be pushed along $\RR_+$.

As $u\to 0$ we have $x\sim iu+ia_+i^\alpha u^\alpha$ and therefore 
$$\frac{f(x(u))}{\alpha a_+x(u)^{\alpha-1}-i}\sim -g_1u  +i^\alpha u^\alpha\left[ig_++g_1a_+(1+\alpha)\right]\ \ \ (u\to 0)$$
and by Watson's Lemma
$$\int_0^\infty e^{-nu} \frac{f(x(u))}{\alpha a_+x(u)^{\alpha-1}-i}\, du \asymp -\frac{g_1}{n}+ i^\alpha \left[ig_++g_1a_+(1+\alpha)\right]\frac{\Gamma(\alpha+1)}{n^{\alpha+1}}$$

To evaluate $\int_{-\delta }^0e^{-n[\tilde{F}(x)-ix]} f(x)\, dx$, after changing the variable of integration $x\to -x$ we see that we have the same integral as in the previous case, only with $a_+$ replace by $a_-$, $g_+$ by $g_-$,  $i$ by $-i$ and $g_1$ by $-g_1$. Adding the asymptotic behavior of the two integrals we obtain

\begin{equation}
  \label{asy2}
  \int_{-\delta }^\delta e^{-n[\tilde{F}(x)-ix]} f(x)\, dx\asymp \frac{\Gamma(\alpha+1)}{n^{\alpha+1}}\left( i^\alpha \left[ig_++g_1a_+(1+\alpha)\right]+  (-i)^\alpha \left[-ig_- -g_1a_-(1+\alpha)\right]\right)
  \end{equation}

\subsection{Proof of (ii) when  $\alpha=2$}

The proof is similar to the previous case, only now, for $x>0$,
$$\frac{f(x(u))}{\alpha a_+x(u)^{\alpha-1}-i}\sim -{g_1}\,u+ \left( -i g_+-3\,{a_+}\,{ g_1}+{g_1}/2
 \right) {u}^{2}\ \ (u\to 0)$$
and by Watson's Lemma
$$\int_0^\infty e^{-nu} \frac{f(x(u))}{\alpha a_+x(u)^{\alpha-1}-i}\, du \asymp -\frac{g_1}{n}+  \left( -ig_+-3g_1a_+ +g_1/2\right)\frac{2}{n^{3}}$$
As above, the asymptotic behavior for the integral with negative $x$ is the same, only with $a_+$ replace by $a_-$, $g_+$ by $g_-$,  $i$ by $-i$ and $g_1$ by $-g_1$
yielding
\begin{equation}
  \label{asy22}
  \int_{-\delta }^\delta e^{-n[\tilde{F}(x)-ix]} f(x)\, dx\asymp \frac{2}{n^{3}}\left( -ig_+-3g_1a_+    +ig_-+3g_1a_-    \right)
  \end{equation}


\section{Further discussions}\label{Further}
\subsection{Connection between regularity and the behavior of the Fourier transform}
The  smoothness of $f$ is characterized by how fast $\hat{f}(k)$ goes to zero as $|k|\to \infty$. To illustrate this, assume $f$ has $n$ derivatives in $L^1(\RR)$. Then, by $n$ integrations by parts we get $\hat{f}=i^n k^{-n} \widehat{f^{(n)}}$, hence  $\hat{f}(k)$ goes to zero faster than $|k|^{-n}$ as $k\to \infty$. We see that, in Fourier space, $n$ can be replaced by any positive number, and this gives a finer characterization of regularity.

\begin{figure}
  \centering\includegraphics[scale=0.4]{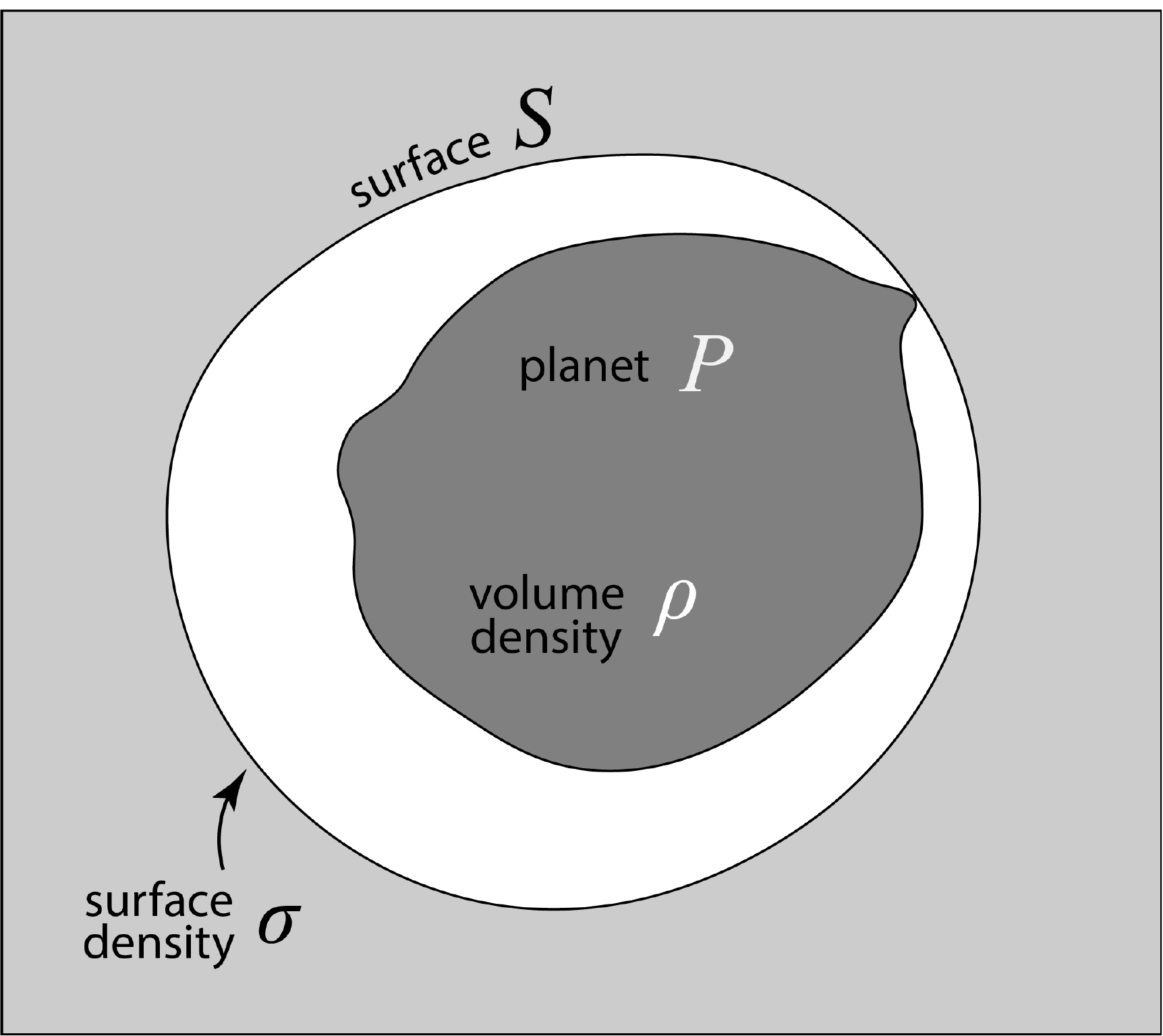}
  \caption{The setting for the physical argument. The light grey region is initially filled with a conductor. The surface depicted here touches the planet at one point only. For the purpose of proving the theorem, we replace the planet $P$ with a planet $P'$ which {\em extends all the way to $S$} by looking at the empty space between $P$ and $S$ as a part of $P'$ where the density is zero.\label{F2}}
  \end{figure}
\subsection{A physics proof of balayage}\label{S:phys1}

The balayage theorem, by now well known in potential theory, electrostatics and gravitation theory was  first proved by Poincar\'e, \cite{bal1,bal2,bal3,bal4,bal5}. In essence, it says the following: given a volume mass density function $\rho$ of a planet $P$, bounded by the surface  $\partial P=S$, there is a surface  density function $\sigma$ on $S$ which produces the same potential in the domain exterior to $S$; $\sigma$ coincides with the harmonic measure on $S$.  Though it admits a short physical ``proof''\footnote{It appears that Poincar\'e himself was well aware of some simple physics proof. Indeed,  on p. 5 of \cite{bal1}  he notes that such a statement would be beyond doubt for physicists.}, presented below,  it is remarkable that there seems to be no comparably short mathematical proof.

In our concrete example $S$ is the Brillouin sphere, which touches the planet $P$ at just one point, and is not the boundary of $P$.

To bring this setting to the standard one of the balayage theorem,  we replace $P$ with a planet $P'$ which {\em extends all the way to $S$} by looking at the empty space between $P$ and $S$ as a part of $P'$ where the density is zero. There is of course no difference between $P$ and $P'$ in terms of gravitational potential.

For the   physics ``proof'', it is  easier to do it as a problem in electrostatics, relying on the fact that the governing mathematical equations are exactly the same. In that language, the function  $\rho$ is a nonnegative charge density. For a physically realistic situation, we imagine the whole volume $P'$ as a perfect insulator (otherwise over time the whole charge would migrate to the surface).
 
Imagine that we filled the space outside  $S$ with a conductor (see Figure \ref{F2}). The positive charge of $P'$ will attract electrons from the conductor (outside $S$)  toward it.

At equilibrium,  the electric field in the interior of the conductor  clearly must be zero. This also implies (by Gauss' flux law) that the charge of any domain strictly inside the conductor must be zero, hence all the negative charge from the conductor must go to the surface $S$, where it will have some (non-positive) {\em surface} charge density function $\sigma$.\footnote{See also Feynman's lectures on Physics, \cite{Feynman}.} Now, since there is no net charge anywhere strictly inside the conductor,  it cannot contribute to the potential (which is zero there)  and, at this stage the conductor can be simply removed without change the potential in the region it occupied. It follows that the potential of $S$ with the surface density function $\sigma$ cancels the potential of $P'$ outside $S$. Hence in the absence of $P'$, a charge density function $-\sigma$ on $S$ creates the same potential, outside $S$, as the planet which proves the statement.

A calculation shows that $\sigma$ is given, in terms of the normal derivative of the Green's function of $P'$, by the formula
\begin{equation}
  \label{eq:formulasigma}
 \sigma(y)= \int_{P'}\frac{\partial G(x,y)}{\partial n_y}\rho(x)dx;\ y\in S
\end{equation}
where $G$ is the Green's function of the domain $P'$.

\subsection{The Green's function and the non-physicality of analyticity of the harmonic measure}
\label{S:Greenf}
As mentioned, in our use of the balayage theorem, $S$ is the Brillouin sphere, which touches Earth ($P$) at just one point.
For a sphere, the Green's function is elementary (see \cite{Evans}), and can be calculated by the method of images. Specifically, if we normalize the radius of the sphere to $1$, then
\begin{equation}\label{eq:GF}
  G(x,y)=\Phi(y-x)-\Phi(|x|(y-x^*))
\end{equation}
where $x^*$ is the dual point of $x$, obtained by reflection across the sphere,
\begin{equation}
  \label{eq:dualp}
  x^*=\frac{x}{|x|^2}
\end{equation}
and $\Phi$, in $n\ge 3$ dimensions, is given by
\begin{equation}\label{eq:GFE}
 \Phi(x)=\frac{C(n)}{|y|^{n-2}}
\end{equation}
where the constant $C$ depends on the dimension only, and has no bearing to the arguments. It is straightforward to show that $\sigma(y)$ is analytic in the exterior of $P'$ as well as in the region between $P$ and $S$ where $\rho=0$.

This means that there is just one point on $P'$ which matters for analyticity, namely the point of contact with $S$. Returning to the electrostatics analogy, it is known that a cusp at the point of contact would trigger an infinite electric field (cf.  St. Elmo's fire). More generally, a singularity in a higher derivative of the relevant features of $P$ at the highest point would result in a similar singularity in the corresponding derivative of $E$. Thus, the condition of analyticity at $E$ is ``not to be expected'' from a real celestial body.

\subsection{Summary of the results}

In this section, the SHE of a potential $V$ is denoted by $\textit{SHE}(V)$. For a planet $P$, it is known that the spherical harmonic series $\textit{SHE}(V)$ of its gravitational potential $V$ {\it can} converge all the way down to the topography of $P$ even for highly non-spherical topographies (see \cite{ocb} for elementary examples of this). However, how often does this occur? More precisely, for a {\it generic} planet, how often does this happen?
\vskip.1in

Our paper gives a statistically definitive answer to this question, not only for the Earth but for nearly all planets and all possible mass-density functions. The following summarizes the first set of results, as stated in Theorem 1.
\vskip.1in

{\bf Result 1} {\sf For a generic planet $P$ (as defined above) with gravitational potential function $V_P$, the event that $\textit{SHE}(V_P)$ converges anywhere below the Brillouin sphere of $P$ occurs with probability zero. More precisely, within a natural Banach space $S$ realizing a particular degree of regularity within a small neighborhood of the tip of the highest peak, the subspace $S_a$ of mass-density functions $\sigma$ yielding a gravitational potential $V_\sigma$ for which $\textit{SHE}(V_\sigma)$ converges anywhere below the Brillouin sphere is both
\begin{itemize}
\item a meagre set (of first Baire category) of $S$, and
\item a subspace of infinite codimension 
\end{itemize}}

This last property implies, among other things, that $S_a$ is nowhere dense in $S$ (although it is a much stronger statement).
\vskip.2in
The framework for Theorem 1 involves 3-dim.~mass-density functions, or 3-dim.~measures. Any such measure on $P$ may be ``swept" to the boundary (topography) $\partial P$ of $P$ using the balayage technique developed by Poincar\'e \cite{bal1,bal2,bal3}. The advantage of doing this is that it provides a context in which we identify {\it necessary and sufficient} conditions for $\textit{SHE}(V)$ to converge below the Brillouin sphere. Referring to the definition of $\mu(\cos(\theta))$ given in (\ref{eq:oneint}), the second theorem states
\vskip.1in

{\bf Result 2}  {\sf For a generic planet $P$ with potential $V_P$, $\textit{SHE}(V_P)$ converges at some point below the Brillouin sphere if and only $\mu$ is real-analytic on $(0,\pi/2)\cup (\pi/2,\pi)$.}
\vskip.1in
 
Given that such analyticity occurs with zero probability, this result yields an alternative proof of the statistical solution to the convergence question given above.
\vskip.2in

\subsection{Some remarks on the modelling of a planet's gravitational field} In the same way the Brillouin sphere $S_{Br}(P)$ for a planet $P$ is the smallest sphere centered at $\bf 0$ containing $P$, the Bjerhammer sphere $S_{Bj}(P)$ of $P$ is defined as the largest sphere centered at $\bf 0$ contained within $P$. A well-known result in mathematical geodesy states that the gravitational potential $V$ in the free space $P^c := \mathbb R^3\backslash P$ exterior to the planet's surface can be realized - in the region $P^c$ - as the limit of a sequence of functions $\{V_n\}$ harmonic on the region $S_{Bj}(P)^e$ of $\mathbb R^3$ exterior to $S_{Bj}(P)$, with $\{V_n\}_{n\ge 1}$ converging uniformly to $V$ on any closed subset of $P^c$.
\vskip.1in

For a potential $V'$, write $SHE(V')$ for the spherical harmonic series of $V$ about infinity, and $SHE_k(V')$ for the truncation of the series $SHE(V')$ in the radial coordinate at degree $-k$. As $V_n$ is harmonic, its spherical harmonic expansion $\textit{SHE}(V_n)$ about infinity converges uniformly to $V_n$ on all of $S_{Bj}(P)^e$ (hence $\displaystyle{\lim_{m\to \infty}} \textit{SHE}_m(V_n) = \textit{SHE}(V_n)$). Thus, for any closed subset $K$ of $P^c$,
\begin{itemize}
\item[(C1)] $\{\textit{SHE}_m(V_n)\}$ converges uniformly to $V_n$ on $K$; 
\item[(C2)] $\{V_n\}$ converges uniformly to $V$ on $K$.
\end{itemize}
At first glance these points seem to imply that the technique of spherical harmonic expansions provides everything needed to estimate $V$ to any degree of accuracy and as close to the topography as we would like. And as a purely theoretical statement, this is true.
\vskip.1in
However, in terms of providing a practical method of computation, it is of little use. There are a couple of reasons for this. The first is that in order to constructively (rather than just existentially) prove the existence of the harmonic functions $V_n$, one needs exact knowledge of $V$ on the topography of $P$, something that can never be achieved in practice. Secondly, the method of proof has nothing to say about the rate of convergence. In other words, there are no empirical methods known for determining, for a given planet, how far out one would have to go in both coordinates in order to achieve a desired degree of approximation to $V$.
\vskip.1in
The third point, however, regards $V$ itself. For the above picture has, on occasion, been misinterpreted to mean that the convergence of the spherical harmonic expansions $\textit{SHE}(V_n)$, for each $n$ individually, can have bearing on the convergence of $\textit{SHE}(V)$. The confusion here originates with the well-known failure (in general) of the commutation of bi-graded limits. The method of computation of the spherical harmonic coefficients implies that for all $m,n\ge 0$ one has
\[
\lim_{n\to \infty} \textit{SHE}_m(V_n) = \textit{SHE}_m(V)
\]
Given this, the following is a direct consequence of the results of this paper
\vskip.1in

{\bf\underbar{Failure of commutation of limits}}\, {\it For a generic planet $P$ generating a gravitational potential $V$, and for any sequence of harmonic functions $\{V_n\}$ with domain $S_{Bj}^e$ converging to $V$ in the manner described above, the inequality
\begin{equation}\label{eqn:ne}
\lim_{m\to\infty}\lim_{n\to\infty}\textit{SHE}_m(V_n)\} \ne \lim_{n\to\infty}\lim_{m\to\infty}\textit{SHE}_m(V_n)\}
\end{equation}
holds with probability one below the Brillouin sphere. In fact, equality holds between the two sides of (\ref{eqn:ne}) precisely when $\textit{SHE}(V)$ converges everywhere below the Brillouin sphere.}
\vskip.1in

It is additionally worth noting that uniform $C^0$ or even $C^\infty$ convergence of real-analytic functions on their common domain implies nothing about the real-analyticity of the limit. The following elementary example illustrates this point.
\vskip.1in

{\bf\underbar{Example}} {\sf Let $f:\mathbb R\to \mathbb R$ be a smooth ($C^\infty$) function that is nowhere analytic (c.f. \cite{na}). By the result of Carleman \cite{tc}, we may construct a sequence of functions $\{f_n\}_{n\in\NN}$  holomorphic in $\CC$ uniformly  converging to $f$ on its entire domain $\mathbb R$. In fact, given the smoothness of $f$, we can, for each finite $k$,  by integrating $k$ times, choose the sequence $\{f_n\}$ to converge uniformly to $f$ on $\mathbb R$ in the Banach $C^k$-norm. The result is then a sequence of functions $\{f_n\}_{n\in\NN}$ holomorphic in $\CC$ converging to $f$ uniformly in the $C^\infty$ Fr\'echet topology.}

\section{Appendix}\label{append}

The space $\mathcal{H}_\beta$ in Section\,\ref{Thm1p1} is not null.
 
 Indeed, for $\beta>1$ not an integer, consider for example the Fourier transform $G$ of $|x|^{\beta-1}P\phi$ where $P$ is a polynomial such that $P^{(j)}(\pm\epsilon)=0$ for $j=0,1,\ldots,m$ with $m>\beta$ and such that $P(0)=1$: 
    \begin{equation}
      \label{formG}
      G(k)=\int_{-\infty}^\infty e^{-ikx} |x|^{\beta-1}P(x)\phi(x)\, dx= \int_{-\epsilon}^\epsilon e^{-ikx} |x|^{\beta-1}P(x)\, dx+\int_{\epsilon<|x|<2\epsilon }e^{-ikx} |x|^{\beta-1}P(x)\phi(x)\, dx
      \end{equation}
 The part of the first integral over $[0,\epsilon]$ can have its integration path deformed in the lower half plane, after which we use Watson's Lemma:
\begin{multline}
\int_0^\epsilon e^{-ikx} x^{\beta-1}P(x)\, dx =\int_0^{-i\infty} e^{-ikx} x^{\beta-1}P(x)\, dx+\int_{\epsilon-i\infty}^{\epsilon} e^{-ikx} x^{\beta-1}P(x)\, dx\\
=(-i)^\beta \int_0^{\infty} e^{-kt} t^{\beta-1}P(-it)\, dt- (-i)^\beta e^{-ik\epsilon}\int_0^{\infty} e^{-kt} (t+i\epsilon)^{\beta-1}P(\epsilon-it)\, dt\\
\asymp (-i)^\beta\frac{\Gamma(\beta)}{k^\beta}P(0)- (-i)^\beta e^{-ik\epsilon} \frac{\Gamma(\beta)}{k^\beta}P(\epsilon)= (-i)^\beta\frac{\Gamma(\beta)}{k^\beta}
\end{multline}

Similarly, for $x\in[-\epsilon,0]$
\begin{multline}
\int_{-\epsilon}^0 e^{-ikx} |x|^{\beta-1}P(x)\, dx=e^{\frac{3i\pi}2(\beta-1)}\int_{-\epsilon}^{-\epsilon-i\infty}e^{-ikx} x^{\beta-1}P(x)\, dx+e^{\frac{3i\pi}2(\beta-1)}\int_{-i\infty}^0 e^{-ikx} x^{\beta-1}P(x)\, dx\\
=e^{\frac{3i\pi}2(\beta-1)}e^{ik\epsilon}(-i)^\beta \int_{0}^{\infty}e^{-kt} (t+i\epsilon)^{\beta-1}P(-\epsilon-it)\, dt- e^{\frac{3i\pi}2(\beta-1)} (-i)^\beta \int_{0}^{\infty}e^{-kt} t^{\beta-1}P(-it)\, dt\\
\asymp  -ie^{\pi i \beta} \frac{\Gamma(\beta)}{k^\beta}
\end{multline}

The part of the second integral in \eqref{formG} over $[\epsilon,2\epsilon]$ is integrated by parts $m$ times, after which it becomes $O(k^{-m})$, hence it is much smaller than $k^{-\beta}$. Similarly, the part of the integral over $[-2\epsilon,\epsilon]$  it is much smaller than $k^{-\beta}$.

For integer $\beta$ a similar formula with a jump discontinuity in the $\beta$'th derivative at $0$ yields similar results.

\end{document}